\documentclass[10pt,a4paper,reqno]{amsart} 


\usepackage{latexsym}
\usepackage{verbatim}
\usepackage{epsfig}
\usepackage{rotating}
\usepackage{amssymb}
\usepackage[T1]{fontenc}
\usepackage{afterpage}
\usepackage{color}
\usepackage{dsfont}

\usepackage{amssymb,amsfonts,amsmath,stmaryrd}

\catcode`\@=11
\def\section{\@startsection{section}{1}%
 \z@{.7\linespacing\@plus\linespacing}{.5\linespacing}%
 {\normalfont\bfseries\scshape\centering}}

\def\subsection{\@startsection{subsection}{2}%
  \z@{.5\linespacing\@plus\linespacing}{.5\linespacing}%
  {\normalfont\bfseries\scshape}}

\def\subsubsection{\@startsection{subsubsection}{3}%
 \z@{.5\linespacing\@plus\linespacing}{-.5em}
  {\normalfont\bfseries\itshape}}
\catcode`\@=12

%
\addtolength{\textheight}{-1mm} \topmargin5mm
\addtolength{\textwidth}{20mm} 
\hoffset -6mm

\usepackage{amssymb,amsfonts,amsmath,stmaryrd,color,epsfig}

\usepackage{epsfig,color}
\usepackage{rotating}
\usepackage{amssymb}
\usepackage[T1]{fontenc}
\usepackage{afterpage}

\newcommand{\zs}{\mathbb{Z}}

\newcommand{\rs}{\mathbb{R}}

\def\emm#1,{{\em #1}}

\newcommand{\Ref}[1]{(\ref{#1})}
\newcommand{\beq}{\begin{equation}}
\newcommand{\eeq}{\end{equation}}

\tolerance=10000

\newtheorem{Theorem}{Theorem}
\newtheorem{propo}[Theorem]{Proposition}
\newtheorem{coro}[Theorem]{Corollary}
\newtheorem{Lemma}[Theorem]{Lemma}
\newtheorem{Definition}[Theorem]{Definition}

\graphicspath{{Figures/}}

\title[Perfect matchings for the three-term Gale-Robinson sequences]
{Perfect matchings\\
 for the three-term Gale-Robinson sequences }

\author{Mireille Bousquet-M\'elou}
\address{CNRS, LaBRI, Universit\'e Bordeaux 1, 351 cours de la Lib\'eration,
  33405 Talence Cedex, France}
\email{bousquet@labri.fr}

\author{James \sc{Propp}}
\address{University of Massachusetts Lowell, MA 01854, USA}
\email{\small\tt JamesPropp@gmail.com}
\thanks{JP was supported by grants from the National Security Agency and the National Science Foundation.
JW was supported by the National Sciences and Engineering Research Council of Canada.}

\author{Julian \sc{West}}
\address{University of Victoria, PO Box 3060,
Victoria, BC V8W3R4, Canada}
\email{\small\tt julian@julianwest.ca}

\date{16 June 2009}

\begin{document}


\begin{abstract}
In 1991, David Gale and Raphael Robinson, 
building on explorations carried out by Michael Somos in the 1980s,
introduced a three-parameter family of rational recurrence relations,
each of which (with suitable initial conditions) appeared
to give rise to a sequence of integers, even though a priori
the recurrence might produce non-integral rational numbers.
Throughout the '90s, proofs of integrality were known only
for individual special cases.
In the early '00s, Sergey Fomin and Andrei Zelevinsky
proved Gale and Robinson's integrality conjecture. They actually
proved much more, and in particular, that certain bivariate rational
functions that generalize 
Gale-Robinson numbers are actually polynomials with integer
coefficients. 
However, their proof
did not offer any enumerative interpretation of the Gale-Robinson
numbers/polynomials.
Here we provide such an interpretation in the
setting of perfect matchings of graphs, which makes
integrality/polynomiality obvious. Moreover,
this interpretation implies
that the coefficients of the Gale-Robinson polynomials
are positive, as Fomin and Zelevinsky conjectured.
\end{abstract}
\maketitle

\begin{center}
{\it In memory of David Gale, 1921-2008}
\end{center}

\section{Introduction}
%
Linear recurrences 
are ubiquitous in combinatorics, as part of a broad general framework 
that is well-studied and well-understood; 
in particular, many combinatorially-defined sequences
can be seen on general principles to satisfy linear recurrences
(see~\cite{Stanley}),
and conversely, when an integer sequence 
is known to satisfy a linear recurrence
it is often possible to reverse-engineer
a combinatorial interpretation for the sequence
(see~\cite{berstel-reutenauer} and references therein for a general
discussion, and~\cite[Chapter~3]{BQ} for specific examples).
In contrast, rational recurrences such as 
$$
s(n) = ( s(n-1)s(n-3) + s(n-2)^2)/s(n-4),
$$
which we prefer to write in the form
$$
s(n)s(n-4) = s(n-1)s(n-3) + s(n-2)^2,
$$
are encountered far less often, 
and there is no simple general theory 
that describes the solutions to such recurrences
or relates those solutions to combinatorial structures.
The particular rational recurrence relation given above
is the Somos-4 recurrence, 
and is part of a general family of recurrences introduced by Michael Somos:
$$
s(n) s(n-k) = s(n-1)s(n-k+1) + s(n-2)s(n-k+2) + \cdots + 
s(n -\lfloor k/2 \rfloor) s(n - \lceil k/2 \rceil).
$$
If one puts $s(0)=s(1)=\cdots=s(k-1)=1$
and defines subsequent terms using the Somos-$k$ recurrence,
then one gets a sequence of rational numbers
which for the values $k=4,5,6,7$
is actually a sequence of integers.
(Sequences Somos-4 through Somos-7 are entries
A006720 through A006723 in~\cite{sloane}.)
Although integer sequences satisfying such recurrences have received 
a fair bit of attention in the past few years, 
until recently algebra remained one step ahead of combinatorics, 
and there was no enumerative interpretation of these integer sequences. 
(For links related to Somos sequences, see
{\tt http://jamespropp.org/somos.html}.)

Inspired by the work of Somos, 
David Gale and Raphael Robinson~\cite{Gale,Galeupdate}
considered sequences given by recurrences of the form
$$a(n) a(n-m) = a(n-i)a(n-j)+a(n-k)a(n-\ell),$$ 
with initial conditions $a(0) = a(1) = \cdots = a(m-1) = 1$,
where $m=i+j=k+\ell$.
We call this the \emm three-term Gale-Robinson recurrence,
\footnote{Gale and Robinson also considered recurrences of the form
$a(n) a(n-m) = a(n-g)a(n-h)+a(n-i)a(n-j)+a(n-k)a(n-\ell)$ 
for suitable values of $g,h,i,j,k,\ell,m$,
but such \emm four-term Gale-Robinson recurrences,
will not be our main concern here.}.
The Somos-4 and Somos-5 recurrences 
are the special cases where $(i,j,k,\ell)$ is equal to 
$(3,1,2,2)$ and $(4,1,3,2)$ respectively.
Gale and Robinson conjectured
that for all integers $i,j,k,\ell > 0$
with $i+j=k+\ell=m$, the sequence $a(0),a(1),\dots$
determined by this recurrence
has all its terms given by integers.
About ten years later, this was 
proved algebraically in an influential paper by Fomin and 
Zelevinsky~\cite{FZ}. 

\subsection{Contents}
In this paper, we 
first give a \emm combinatorial, proof of the integrality of
the three-term Gale-Robinson sequences.
The integrality comes as a side-effect of producing a
combinatorial interpretation of those sequences.
Specifically, we construct a sequence of graphs $P(n;i,j,k,\ell)$
($n \geq 0$) and prove in Theorem~\ref{thm:GR-pinecones} that
the $n$th graph in the sequence has $a(n)$ (perfect) matchings.  
Our graphs, which we call {\it pinecones}, 
generalize the well-known Aztec diamond graphs, 
which are the matchings graphs for the Gale-Robinson sequence 
1, 1, 2, 8, 64, 1024, \dots \, in which $i=j=k=\ell=1$.  
A more generic example of a pinecone is shown in
Figure~\ref{big-example1}. 
All pinecones are subgraphs of the square grid.

\begin{figure}[htb]
\begin{center}
\scalebox{0.9}{\input{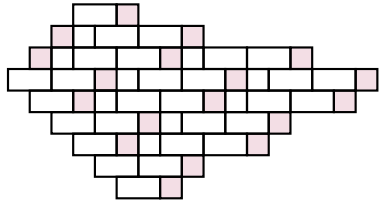}}
\caption{The pinecone $P(25;6,2,5,3 )$. Its matching number is
$a(25)$, where $a(n)$ is the Gale-Robinson sequence associated with
$(i,j,k,\ell)=(6,2,5,3)$. }
\label{big-example1}
\end{center}
\end{figure}

We give two ways to construct pinecones
for the Gale-Robinson sequences:
a recursive method (see Figure~\ref{fig:somos4-rec} and the surrounding text)
that constructs the graph $P(n;i,j,k,\ell)$
in terms of the smaller graphs $P(n';i,j,k,\ell)$ with $n' < n$,
and a direct method (see Formula~\eqref{UL-def}
in Section~\ref{sec:pinecones-GR})
that allows one to construct the graph $P(n;i,j,k,\ell)$ immediately.
The heart of our proof is the demonstration that if one defines $a(n)$
as the number of perfect matchings of $P(n)\equiv P(n;i,j,k,\ell)$,
the sequence $a(0),a(1),a(2),...$ satisfies the Gale-Robinson recurrence.
This fact, in combination with a simple check
that $a(0)=a(1)=\cdots=a(m-1)=1$,
gives an immediate inductive validation
of our claim that $P(n)$ has $a(n)$ perfect matchings for all $n$,
which yields additionally the integrality of $a(n)$.

General pinecones are defined in Section~\ref{sec:pinecones}, where we also
explain how to compute inductively their matching number via Kuo's
condensation lemma~\cite{Kuo}.
In Section~\ref{sec:pinecones-GR}, we describe how
to associate a sequence of pinecones to a Gale-Robinson sequence, and
observe that for these pinecones, the condensation lemma specializes
precisely to the Gale-Robinson recurrence.
Indeed, the recursive method of constructing pinecones,
in combination with Kuo's condensation lemma,
gives combinatorial meaning to the
different terms $a(n_1) a(n_2)$ of the Gale-Robinson recurrence.

In Section~\ref{sec:bivariate}, we refine our argument to prove that
the sequence $p(n)\equiv p(n;w,z)$ defined by
$$
p(n)p(n-m)=w\,p(n-i)p(n-j)+z\,p(n-k)p(n-\ell),
$$
with $i+j=k+\ell=m$ and $p(0)=p(1)=\cdots=p(m-1)=1$, is a sequence of
polynomials in $w$ and $z$ with nonnegative integer coefficients.
More precisely, we prove in Theorem~\ref{thm:GR-pinecones-refined}
that $p(n;u^2,v^2)$ counts perfect matchings of
the pinecone $P(n;i,j,k,\ell)$ by the number of \emm special,
horizontal edges (the exponent of the variable $u$) and the number 
of vertical edges (the exponent of the variable $v$). 
The fact that $p(n)$ is a polynomial with coefficients
in $\zs$ was proved in~\cite{FZ}, but no combinatorial explanation was
given and the non-negativity of the coefficients was left open.

\subsection{Strategy, and connections with previous work}
For much of the work in this paper, we share precedence with 
the students in the NSF-funded program REACH (Research Experiences 
in Algebraic Combinatorics at Harvard), led by James Propp, whose
permanent archive is on the web at {\tt http://jamespropp.org/reach/}.
A paper by one of these students, David Speyer~\cite{Sp}, 
introduced a very flexible framework (the ``crosses and wrenches method'') 
that, starting from a recurrence relation of a certain type,
constructs a sequence of graphs 
whose matching numbers satisfy the given recurrence.
This framework includes the three-term Gale-Robinson recurrences, and
thus yields a combinatorial proof of the integrality of the associated
sequences. This extends to a proof that the bivariate Gale-Robinson polynomials
mentioned above are indeed polynomials, and have non-negative coefficients.
One difference with our paper is that Speyer's graphs are only
described explicitly for Somos-4 and Somos-5 sequences, whereas our
construction is explicit for any Gale-Robinson sequence.
Moreover, the description of
our graphs as subgraphs of the square grid looks more regular, and may
be useful to study limit shapes of random perfect matchings
such as the perfect matchings shown in Figure~\ref{fig:random}.

Let us mention that shortly after Speyer did his work on perfect matchings,
he and his fellow REACH-participant Gabriel Carroll 
did for four-term Gale-Robinson recurrences 
what Speyer had done for three-term Gale-Robinson recurrences,
by introducing new objects called ``groves'' 
to take the place of perfect matchings~\cite{CS}.
Carroll and Speyer's work gives, as two special cases, 
combinatorial proofs of the integrality of Somos-6 and Somos-7.

\medskip

The strategies that led to 
Speyer's article~\cite{Sp} and to the present article
are not entirely independent;
each made use of Propp's prior construction
of a suitable perturbed Gale-Robinson recurrence,
which we explain next. 
The explanation will mostly be of interest
to researchers seeking to apply similar techniques
to other problems; others may want to skip the rest of the introduction.

Suppose we perturb a three-term Gale-Robinson recurrence
by replacing the singly-indexed Gale-Robinson number $a(n)$ 
by a triply-indexed quantity $A(n,p,q)$ 
satisfying the perturbed recurrence
$$
A(n,p,q)A(n-m,p,q)=A(n-i,p-1,q)A(n-j,p+1,q)+A(n-k,p,q+1)A(n-\ell,p,q-1).
$$
(This choice of perturbation is not as special as it looks:
all that matters is that the pairs $(-1,0),(1,0),(0,1),(0,-1)$
that describe the perturbations of the second and third coordinates
in the four index-triples on the right-hand side,
viewed as points in the plane,
form a non-degenerate centrally-symmetric parallelogram.
Choosing a different centrally-symmetric parallelogram
is tantamount to a simple re-indexing of the recurrence.)
If we take as our initial conditions $A(n,p,q) = x_{n,p,q}$
for all $n$ between 0 and $m-1$ and $p,q$ arbitrary,
with (formal) indeterminates $x_{n,p,q}$,
then each $A(n,p,q)$ with $n \geq m$ can be expressed 
as a rational function of these indeterminates.
It should be emphasized here
that for all $n,p,q,r,s$,
the rational functions $A(n,p,q)$ and $A(n,r,s)$
are the same function up to re-indexing of the indeterminates.

Propp conjectured that each $A(n,p,q)$ is a Laurent polynomial 
in some finite subset of the (infinitely many) indeterminates
$x_{n,r,s}$, with integer coefficients;
that is, each $A(n,p,q)$ is an element of $\zs[x_{n,r,s}^{\pm 1}]$.
This was subsequently proved by Fomin and Zelevinsky~\cite{FZ}.
Note that if one sets all the indeterminates $x_{n,r,s}$ equal to 1,
the Laurent polynomials $A(n,p,q)$ specialize to the Gale-Robinson
numbers $a(n)$.  Propp conjectured that 
each coefficient in each such Laurent polynomial is positive
(a fact that is not proved by Fomin and Zelevinsky's method)
and furthermore is equal to $1$.

Propp knew
that in the case $i=j=k=\ell=1$,
the Laurent polynomials $A(n,p,q)$ can be interpreted as 
multivariate matching polynomials of suitable graphs,
namely, the Aztec diamond graphs.
(See Subsection~\ref{sec:preliminaries} for a definition of
matching polynomials.) 
Indeed, David Robbins had studied
the three-parameter ``perturbed recurrence'' in this case,
on account of its relation to the study of determinants,
and had shown (with Rumsey)~\cite{RR}
that the associated rational functions are Laurent polynomials.
(For more background on this connection with determinants, see~\cite{BP}.)
The work by Elkies, Kuperberg, Larsen, and Propp~\cite{EKLP}
had shown that the monomials in these Laurent polynomials
correspond to perfect matchings of Aztec diamond graphs.
So it was natural to hope that this correspondence
could be extended to the Gale-Robinson family of recurrences.

It should be acknowledged here that the idea 
behind the specific triply-indexed perturbation $A(n,p,q)$
of the Gale-Robinson sequence that proved so fruitful
came from an article of Zabrodin~\cite{Z}
that was brought to Propp's attention by Rick Kenyon.
This article led Propp to think that the recurrence studied by Robbins
should be considered a special case of
the ``discrete bilinear Hirota equation'', or ``octahedron equation'',
and that other recurrences such as the Gale-Robinson recurrence
should likewise be considered in the context of the octahedron equation.

What the REACH students were able to do,
after diligent examination of the Laurent polynomials $A(n,p,q)$,
is view those Laurent polynomials as
multivariate matching polynomials of suitable graphs.
Bousquet-M\'elou and West, independently, did the same for small
values of $n$, until they were able to extrapolate these examples to
the generic form of the graphs, which became the pinecones of this paper.

There is a general strategy here for
reverse-engineering combinatorial interpretations of
algebraically-defined sequences of numbers:
add sufficiently many extra variables so that the numbers 
become Laurent polynomials in which every coefficient equals 1.
For another application of this reverse-engineering method
(in the context of Markoff numbers and frieze patterns),
see~\cite{Pr2}.

\section{Perfect matchings of pinecones} \label{sec:pinecones}

In this section we define a family of subgraphs of the square
lattice, which we call \emph{pinecones}.  Then we prove that the
number of perfect matchings of a pinecone can be computed inductively
in terms of the number of perfect matchings of five of its
sub-pinecones.

\subsection{Preliminaries}  \label{sec:preliminaries}
To begin with, let us recall some terminology about graphs. A (simple)
graph $G$ is an ordered pair $(V,E)$ where $V$ is a finite set of
\emm vertices,, and $E$, the set of \emm edges,, is 
a collection of 2-element subsets of $V$.
The \emm degree, of a vertex $v$ is the number of edges in $E$
containing $v$. 
A \emm subgraph, of $G$ is a graph $H=(V', E')$ such that
$V' \subset V$ and $E' \subset E$. 
If, in addition, $V'=V$, we say that $H$ is a 
\emm spanning subgraph, of $G$. 
The \emm intersection,  of two graphs $G=(V,E)$ and $H=(V',E')$ 
is the graph $G\cap H= (V \cap V' , E\cap E')$,
and the \emm union,  of the two graphs 
is the graph $G\cup H= (V \cup V' , E\cup E')$.
Given two graphs $G=(V,E)$ and $H=(V',E')$, 
we denote by $G\setminus H$ the subgraph
$(V'', E'')$, where $V''=V\setminus V'$ and $E''$ is the set of edges
of $E\setminus E'$ having both endpoints in $V''$.

A \emm perfect matching, of a graph $G=(V,E)$ is a subset $E'$ of $E$ such
that every vertex of $V$ belongs to exactly one edge of $E'$. We will
sometimes omit the word ``perfect'' and refer to perfect 
matchings as simply ``matchings''. The
\emm matching number, of $G$, denoted by $m(G)$, is the number of perfect
matchings of $G$. 
More generally, we shall often consider the set $E$ of edges as a set of
commuting indeterminates, and associate with a (perfect) matching $E'$ the
product of the edges it contains.  The \emm matching polynomial, of $G$ is
thus defined to be
$$
M(G):= \sum_{E'} \prod_{e \in E'} e,
$$
where the sum runs over all perfect matchings $E'$ of $G$.
If we replace every $e$ that occurs in this sum-of-products by 
a non-negative integer $n_e$, then this expression becomes
a non-negative integer, namely, the number of perfect matchings
of the multigraph in which there are $n_e$ edges joining the
vertices $x$ and $y$ for all $e = \{x,y\}$ in $E$ (and 
no edges joining $x$ and $y$ if $\{x,y\}$ is not in $E$).
In particular, if each $n_e$ is set equal to 0 or 1, 
then the matching polynomial becomes the number of perfect
matchings of the subgraph of $G$ consisting of precisely 
those edges $e$ for which $n_e = 1$.

\begin{figure}[thb]
\begin{center}
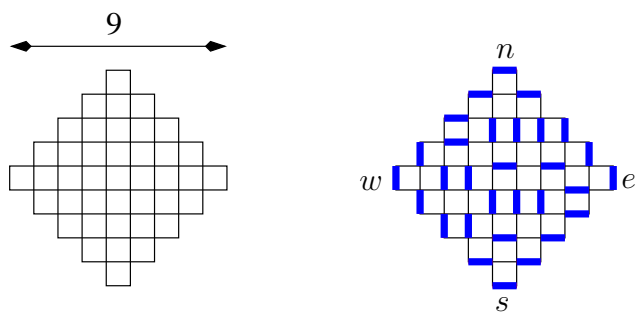
\end{center}
\caption{An Aztec diamond graph
of width $9$, and one of its perfect matchings.}
\label{fig:aztec}
\end{figure}

\subsection{Aztec diamonds graphs}
The {pinecones} considered in this paper are certain subgraphs of
the square lattice. The most regular of them are the
\emph{(Aztec) diamond graphs}, which are the duals
of the so-called Aztec diamonds, which were first studied in detail
in~\cite{EKLP}.  A diamond graph of width $2k-1$ is obtained by taking
consecutive rows of squares, of length $1,3, \ldots , 2k-3, 2k-1,
2k-3, \ldots , 3,1$ and stacking them from top to bottom,
with the middle squares in all the rows lining up vertically,
as illustrated by Figure~\ref{fig:aztec}.  
Let $A$ be a diamond graph of width $2k-1$. 
Let $A_N$ be the diamond graph of width $2k-3$ obtained by deleting 
the leftmost and rightmost squares of $A$ as well as the two lowest squares 
of each of the remaining $2k-3$ columns of $A$.
We call $A_N$ the North sub-diamond of $A$. Define similarly 
the South, West and East sub-diamonds of $A$, denoted by $A_S, A_W$ and
$A_E$. Finally, let $A_C$ be the central sub-diamond of $A$ of width
$2k-5$ (Figure~\ref{fig:sub-diamonds}).
The following result is a
reformulation of Kuo's condensation theorem for Aztec
diamond graphs~\cite{Kuo}.

\begin{figure}[htb]
\begin{center}
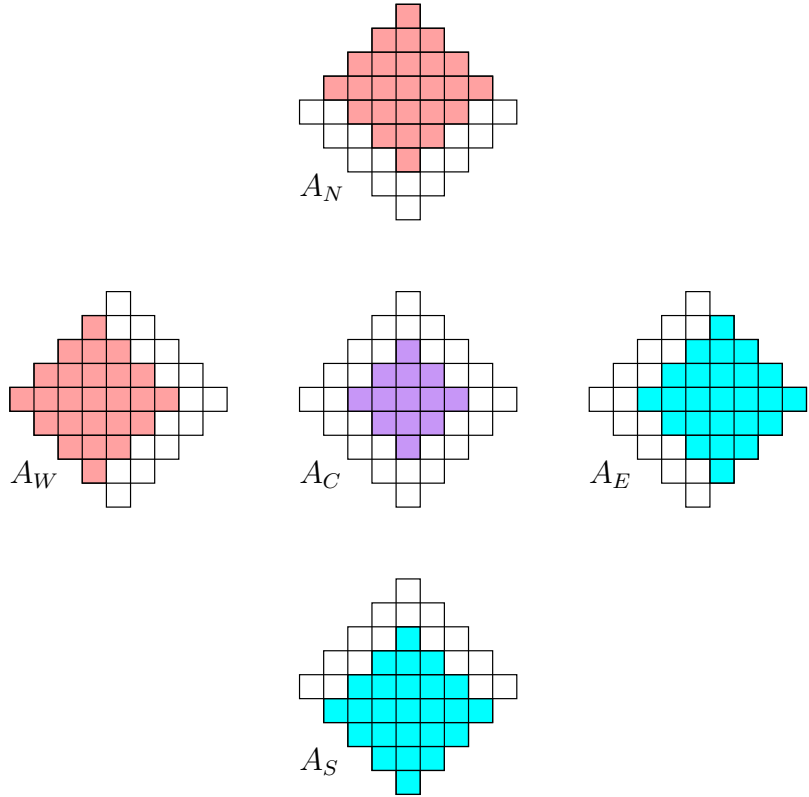
\end{center}
\caption{The five sub-diamonds of a diamond graph of width $9$.}
\label{fig:sub-diamonds}
\end{figure}

\begin{Theorem}[{\bf Condensation for diamonds graphs}] 
\label{thm:kuo}
The matching polynomial of a diamond graph $A$ is related to the matching
polynomials of its 
 sub-diamonds by
$$
M(A) M(A_C)= ns M(A_W) M(A_E) + ew M(A_N) M(A_S),
$$
where $n,s,w,$ and $e$ denote respectively the top (resp.~bottom,
westmost, eastmost) edge of $A$ (see Figure~{\it\ref{fig:aztec}}).
\end{Theorem}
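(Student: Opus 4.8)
The plan is to read this identity off from Kuo's graphical condensation theorem~\cite{Kuo}, of which it is a specialization. The diamond graph $A$ is a subgraph of the square lattice, hence planar, and it is bipartite: two-color its vertices (the unit cells) according to the parity of the sum of their coordinates. Kuo's theorem asserts that if $G$ is a finite planar bipartite graph whose two color classes have equal size, and $a,b,c,d$ are four vertices occurring in this cyclic order on a single face of $G$, with $a,c$ in one class and $b,d$ in the other, then
$$
M(G)\,M(G\setminus\{a,b,c,d\})
 = M(G\setminus\{a,b\})\,M(G\setminus\{c,d\})
 + M(G\setminus\{a,d\})\,M(G\setminus\{b,c\}).
$$
First I would select four vertices $a,b,c,d$ on the outer face of $A$, one in the vicinity of each tip, chosen so that the color-alternation and cyclic-order hypotheses hold; this is possible because consecutive cells along the boundary staircases alternate in color.

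Next I would identify each of the five deleted-vertex subgraphs $A\setminus\{a,b,c,d\}$, $A\setminus\{a,b\}$, and so on, with one of the five sub-diamonds, up to a monomial in the boundary edges. The mechanism is forced edges: each of the four tips is a cell of degree one, so the incident edges $n,s,w,e$ must belong to every perfect matching. Deleting a distinguished vertex destroys one such forced edge and, by a cascade, forces a chain of further edges along the affected boundary; removing these forced matched pairs strips $A$ down precisely to the corresponding sub-diamond. I would check that the four-vertex deletion reduces, after forced-edge removal, to the central sub-diamond $A_C$, that the two adjacent pairings reduce to $\{A_W,A_E\}$ and to $\{A_N,A_S\}$, and that the forced tip-edges left outside each sub-diamond are exactly the pair $n,s$ in the first product and $e,w$ in the second, thereby producing the coefficients $ns$ and $ew$. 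With these identifications, Kuo's identity becomes the stated formula verbatim.

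The step I expect to be the main obstacle is precisely this geometric bookkeeping: verifying that each vertex deletion, together with the induced forced-edge cascade, yields exactly the claimed sub-diamond and exactly the claimed leftover monomial, and confirming the color and cyclic-order hypotheses for the chosen $a,b,c,d$. As an independent check, and as an alternative self-contained route that sidesteps the color subtlety, I would run the standard superposition argument: superimpose a perfect matching of $A$ on a perfect matching of $A_C$ embedded in $A$, analyze the resulting union as a disjoint collection of doubled edges, cycles, and a few paths joining the cells present in $A$ but absent from $A_C$, and then resolve these paths by a weight-preserving toggling to set up a bijection realizing the two terms on the right-hand side.
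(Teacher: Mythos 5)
Your route is essentially the paper's: the paper gives no proof of Theorem~\ref{thm:kuo} at all, presenting it as a reformulation of Kuo's condensation theorem~\cite{Kuo}, and your plan --- apply the (weighted) four-vertex condensation identity to suitably chosen boundary vertices of $A$ and strip each deleted graph down to a sub-diamond via forced-edge cascades --- is exactly how that reformulation is made precise. Two corrections to your bookkeeping, though. First, your stated mechanism starts from a false premise: in $A$ itself no vertex has degree one, and none of the edges $n,s,w,e$ is forced (the all-horizontal matching of $A$ avoids $w$ and $e$; indeed the paper remarks that the extreme vertical edges of a closed pinecone are never forced nor forbidden). Degree-one vertices, and hence the cascades you invoke, appear only \emph{after} deleting the chosen vertices $a,b,c,d$ --- e.g.\ taking $a,b,c,d$ to be one endpoint of each of $n,e,s,w$, chosen with alternating colors, deleting a pair such as $\{a,d\}$ creates degree-one vertices along two sides of the diamond whose forced edges peel the graph down to one sub-diamond of width $2k-3$. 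Second, what is left outside each sub-diamond is not merely a single tip edge but a long monomial of forced edges running along two boundary staircases; the clean coefficients $ns$ and $ew$ emerge only because Kuo's identity reads $M(A)\,M(A\setminus\{a,b,c,d\})=M(A\setminus\{a,b\})\,M(A\setminus\{c,d\})+M(A\setminus\{a,d\})\,M(A\setminus\{b,c\})$, and the forced monomial $\mu_0$ with $M(A\setminus\{a,b,c,d\})=\mu_0\,M(A_C)$ divides each product on the right, with $\mu_1\mu_2=\mu_0\cdot ew$ and $\mu_3\mu_4=\mu_0\cdot ns$ (in the pairing where $\{a,b\}$ and $\{c,d\}$ yield $A_S$ and $A_N$ up to monomials $\mu_1,\mu_2$, and $\{a,d\}$, $\{b,c\}$ yield $A_E$, $A_W$). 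This cancellation is the real content of the "geometric bookkeeping" you flag, and once it is verified the identity follows; your backup superposition argument is also viable but is in effect a reproof of Kuo's theorem rather than a shortcut around it.
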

In particular, if $a(n)$ (with $n \geq 2$) denotes the matching number 
of a diamond graph of width $2n-3$, then
$$
a(n)a(n-2)=2a(n-1)^2
$$
for all $n \geq 2$, provided we adopt the initial conditions $a(0)=a(1)=1$. 
This shows that $a(n)$ is 
the three-term Gale-Robinson sequence associated with $i=j=k=\ell=1$,
and implies $a(n)=2^{n\choose 2}$. 

The condensation theorem we shall prove for pinecones 
appears as a generalization of the condensation theorem for diamond
graphs. But it can 
actually also be seen as a \emph{specialization} of it, and this is the
point of view we adopt in this paper. The key idea is to forbid
certain edges in the matchings.
\begin{coro}
\label{coro:G}
Let $A$ be a diamond graph, and let $G$ be a spanning subgraph of
$A$, containing the edges $n,s,w$ and $e$. Let $G_N= G\cap A_N$, and
define $G_S, G_W, G_E$ and $G_C$ similarly. Then
$$
M(G) M(G_C) = ns M(G_W) M(G_E) + ew M(G_N) M(G_S).
$$
\end{coro}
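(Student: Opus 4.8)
The plan is to deduce Corollary~\ref{coro:G} from Theorem~\ref{thm:kuo} by viewing $G$ as the specialization of the matching polynomial of $A$ in which every edge of $A$ not present in $G$ is set to $0$ and every edge of $G$ is set to $1$. First I would recall the observation already made in Subsection~\ref{sec:preliminaries}: if we treat the edge set $E$ of $A$ as commuting indeterminates and substitute $n_e\in\{0,1\}$ for each $e$, then $M(A)$ evaluates to the number of perfect matchings of the subgraph of $A$ consisting of exactly those edges with $n_e=1$. Since $G$ is a spanning subgraph of $A$, it is precisely the subgraph obtained by keeping the edges of $G$ and discarding the rest; hence under the substitution $e\mapsto 1$ for $e\in E(G)$ and $e\mapsto 0$ otherwise, the matching polynomial $M(A)$ specializes to $M(G)$ (or more generally to the matching polynomial of $G$ if we retain the edge-variables of $G$ as indeterminates rather than setting them to $1$).

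The key step is to verify that this same substitution carries each of the five sub-diamond polynomials to the corresponding sub-pinecone polynomial. Here I would use the definition $G_N=G\cap A_N$ together with the fact that $A_N$ is itself a spanning subgraph restriction of $A$ to a fixed vertex set. Because $G_N$ consists of exactly those edges of $G$ lying in $A_N$, the edges we set to $0$ within $A_N$ are precisely the edges of $A_N$ absent from $G$; thus the substitution sends $M(A_N)$ to $M(G_N)$, and similarly for $A_S,A_W,A_E,A_C$. The only subtlety to check is that the vertex sets match up correctly — that $G_N$ and $A_N$ share the same vertices so that ``perfect matching'' means the same constraint on both sides — which follows because the sub-diamonds are defined by deleting entire squares (hence fixed vertex sets) and $G_N$ inherits that same vertex set by intersection.

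With these specializations in hand, I would simply apply the ring homomorphism (the substitution is a homomorphism on the polynomial ring $\zs[E]$) to both sides of the identity in Theorem~\ref{thm:kuo}. The hypothesis that $G$ contains the four boundary edges $n,s,w,e$ guarantees that these distinguished edge-variables are \emph{not} sent to $0$, so the coefficients $ns$ and $ew$ survive on the right-hand side and the identity does not collapse. Applying the homomorphism term by term then yields
$$
M(G)\,M(G_C)=ns\,M(G_W)\,M(G_E)+ew\,M(G_N)\,M(G_S),
$$
which is exactly the claimed relation.

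The main obstacle I anticipate is the bookkeeping in the second step: one must be careful that intersecting $G$ with $A_N$ (an operation on graphs) corresponds cleanly to the algebraic operation of zeroing out the non-$G$ edge-variables within $A_N$, and in particular that no vertex of $A_N$ is inadvertently left unmatchable or that no edge of $G_N$ fails to lie in $A_N$. This is where the precise definitions of $G\setminus H$ and of the sub-diamonds must be invoked carefully; everything else is a routine transport of the polynomial identity through a substitution homomorphism.
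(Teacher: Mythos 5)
Your proposal is correct and takes essentially the same approach as the paper: both obtain $M(G)$, $M(G_N)$, etc., by setting to zero every edge-variable of $A$ absent from $G$ (keeping the edges of $G$ as indeterminates), and then specialize the identity of Theorem~\ref{thm:kuo} under this substitution, the hypothesis that $n,s,w,e$ belong to $G$ ensuring the two monomial prefactors survive. Your additional checks---that the substitution is a ring homomorphism and that $G_N=G\cap A_N$ is a spanning subgraph of $A_N$---merely make explicit what the paper's proof leaves implicit.
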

\begin{proof} Since $G$ is a spanning subgraph of $A$, every perfect
matching of $G$ is a perfect matching of $A$. Hence the matching
polynomial $M(G)$ is simply obtained by setting
$a=0$ in $M(A)$, for every edge $a$ that belongs to $A$ but not to
$G$.
The same property relates $M(G_N)$ and $M(A_N)$, and so on. 
Consequently, Corollary~\ref{coro:G} is simply obtained by setting
$a=0$ in Theorem~\ref{thm:kuo}, for every edge $a$ that belongs to $A$
but not to $G$.
\end{proof}

\subsection{Pinecones: definitions}\label{sec:pinecones-def}
A \emph{standard pinecone} of width $2k-1$ 
is a subgraph $P=(V,E)$ of the square lattice
satisfying the three following conditions, illustrated by
Figure~\ref{fig:pinecone}.$a$:
\begin{itemize}
\item[$1.$] The horizontal edges form $i+j+1$ segments \emm of odd length,,
   starting from the points $(0,1), (1,2) \ldots , (i-1,i)$ and $(0,0), (1,-1),
   \ldots , (-j,j)$, for some $i \geq 1$, $j \ge 0$.
   Moreover, if $L_m$ denotes the length of the segment lying at
   ordinate $m$, then 
$$
L_{-j} < \cdots < L_{-1} < L_0 = 2k-1 = L_1 > L_2 > \cdots > L_i.
$$
\item[$2.$]  The set of vertices $V$ is the set of vertices of the square
   lattice that are incident to the above horizontal edges.
\item[$3.$]  Let $e=\{(a,b), (a,b+1)\}$ be a vertical edge of the square lattice
   joining two vertices of $V$. If $a + b$ is even, then $e$
   belongs to the set of edges $E$, and we say that $e$ is an \emm even,
   edge of $P$. Otherwise, $e$ may belong to
   $E$, or not (Figure~\ref{fig:pinecone}.$a$),
   and we call $e$ a (present or absent) \emm odd, edge.
\end{itemize}
The leftmost vertices of a standard pinecone are always $(0,0)$ and $(0,1)$.
However, sometimes it is convenient to consider graphs 
obtained by shifting such a graph to a different
location in the two-dimensional lattice.
We will call such a graph a {\it transplanted pinecone\/}.
In a transplanted pinecone, the leftmost vertices
are $(a,b)$ and $(a,b+1)$, where $a+b$ is even.
In some cases, where the distinction between
standard and transplanted pinecones is not relevant
or where we think the context makes it clear
which sort of pinecone we intend,
we omit the modifier and simply use the word ``pinecone''.

\begin{figure}[htb]
\begin{center}
\scalebox{0.9}{\input{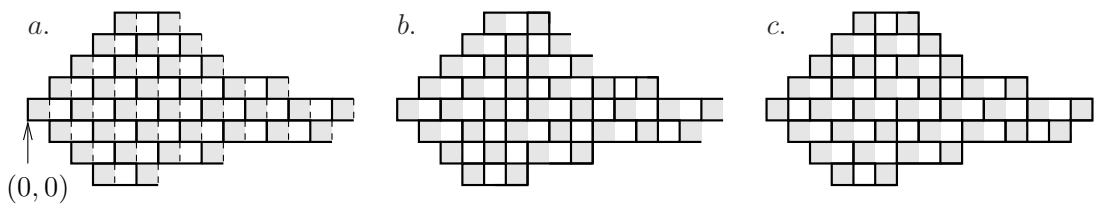}}
\end{center}
\caption{Some pinecones of width $15$. $a$. The dashed edges may
 belong to the graph, or not.  $b$. A pinecone. $c$. A closed pinecone.}
\label{fig:pinecone}
\end{figure}

Figures~\ref{fig:pinecone}.$b$ and~\ref{fig:pinecone}.$c$ show two 
specific ways to make the choices indicated in Figure~\ref{fig:pinecone}.$a$
and obtain a pinecone of width 15.
The pinecone of Figure~\ref{fig:pinecone}.$c$ is \emph{closed}, 
meaning that it contains no vertex of degree $1$.
(Such a vertex can only occur at the right border of the pinecone,
and occurs if the rightmost vertex of some horizontal segment
does not belong to a vertical edge,
as shown in Figure~\ref{fig:pinecone}.$b$.)
An Aztec diamond graph is an example of a closed pinecone. 
Let us color the cells of the square lattice black and white
in such a way that the cell containing the vertices $(0,0)$ and
$(1,1)$ is black. 
The \emm faces,  of the pinecone are
the finite connected components of the complement of the graph in $\rs^2$.
The faces of a pinecone $P$ are of three types: 
black squares, white squares, and horizontal rectangles
consisting of a black cell to the left and a white cell to the right. 
We insist on the distinction between a \emph{cell} (of the
underlying square lattice) and a \emph{square} (a face of $P$
that has 4 edges).  For instance, the longest row of a pinecone of width
$2k-1$ contains exactly $2k-1$ cells, but may contain no square at all.
Denoting by $\ell$ (resp.~$r$) the leftmost (resp.~rightmost)
cell of the longest row of $P$, we say that $P$ is \emph{rooted} on
$(\ell,r)$.  (If $P$ is standard, then $\ell$ is the cell
with $(0,0)$ as its lower-left corner.)

It is easy to see that a pinecone $P$ is closed if and only if the rightmost
finite face of each row is a black square. In this case, the rightmost black 
square in each row is also the rightmost cell of the row. If moreover $P$ is
standard, it is completely determined by the position of its black squares. 
Equivalently, it is completely determined by the position of its  
odd vertical edges.
Conversely, consider any finite set $S$ of black squares
whose lower-left vertices lie in the 90 degree wedge
bounded by the rays $y=x>0$ and $-y=x>0$.  
Assume that $S$ is \emm
monotone,, in the following sense: 
the rows that contain at least one square of $S$ are
consecutive (say from row $-j$ to row $i$) and
for $m>0$ (resp.~$m<0$), the rightmost black square in row $m$ 
occurs to the left of the rightmost black square 
in row $m-1$ (resp.~$m+1$).
Then is a (unique) closed standard pinecone whose set of black
squares is $S$.

We shall often consider the empty graph as a particular closed pinecone
(associated with the empty set of black squares).
The empty graph has one perfect matching, of weight 1.

\subsection{The core of a pinecone}
When a pinecone $P$ is not closed,
some of the edges of $P$ cannot belong to any perfect matching of $P$.
Specifically, if $v$ is a vertex of degree 1 in $P$,
then in any perfect matching of $P$,
$v$ must be matched with the vertex to its left (call it $u$),
so that $u$ cannot be matched with any of its other neighbors.
Indeed, there can be a chain reaction
whereby a forced edge, in causing other edges to be forbidden,
leads to new vertices of degree 1,
continuing the process of forcing and forbidding other edges.
An example of this is shown in Figure~\ref{fig:pinecone-shelling}.
The left half of the picture shows a non-closed pinecone $P$,
and the right half of the picture shows a closed
sub-pinecone $\bar P$ of $P$ along with a set of isolated edges.
The reader can check
(starting from the rightmost frontier of $P$
and working systematically leftward)
that each of the isolated edges is a forced edge
(that is, it must be contained in every perfect matching of $P$),
so that a perfect matching of $P$
is nothing other than a perfect matching of $\bar P$
together with the set of isolated edges shown at right.
In this subsection, we will give a systematic way of
reducing a pinecone $P$ to a smaller closed pinecone
by pruning away some forced and forbidden edges.

\begin{figure}[htb]
\begin{center}
\input{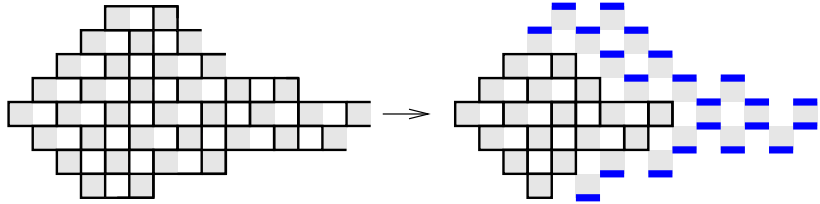}
\end{center}
\caption{From a pinecone $P$ to its core $\bar P$.}
\label{fig:pinecone-shelling}
\end{figure}

It can easily be checked that the union or intersection
of two standard pinecones is a standard pinecone,
and that the union or intersection
of two closed standard pinecones is a closed standard pinecone.
It follows that, if $P$ is a standard pinecone,
there exists a largest closed standard sub-pinecone of $P$,
namely, the union of all the closed standard sub-pinecones of $P$.
We call this the \emm core, of $P$ and denote it by $\bar P$.
(If $P$ is not a standard pinecone but a transplanted pinecone
rooted at the cell with lower-left corner $(a,b)$,
we define $P_0$ as the standard pinecone
obtained by translating $P$ by $(-a,-b)$,
and we define the core of $P$ as the core of $P_0$ translated by $(a,b)$.
However, for the rest of this section we will
restrict attention to standard pinecones.)

Here is an alternative (more constructive and less abstract)
approach to defining the core.  Let $P$ be a standard pinecone.
Let $b_0$ be the rightmost black square in row 0 of $P$,
let $b_1$ be the rightmost black square in row 1 of $P$
that lies strictly to the left of $b_0$,
let $b_2$ be the rightmost black square in row 2 of $P$
that lies strictly to the left of $b_1$, and so on (proceeding upwards);
likewise, let $b_{-1}$ be the rightmost black square in row $-1$ of $P$
that lies strictly to the left of $b_0$,
and so on (proceeding downwards).
If at some point there is no black square that satisfies the requirement,
we leave $b_m$ undefined.
Consider all the faces of $P$ that lie in the same row as,
and lie weakly to the left of, one of one of the $b_k$'s.
This set of faces gives a closed pinecone $\tilde P$.
At the same time, it is clear that
any closed sub-pinecone $Q$ of $P$ must be a sub-pinecone of $\tilde P$.
For, the rightmost black square in row 0 of $Q$
can be no farther to the right than $b_0$,
which implies that the rightmost black square in row 1 of $Q$
can be no farther to the right than $b_1$, etc.;
and likewise for the bottom half of $Q$.
Hence the sub-pinecone $\tilde P$ we have constructed
is none other than the core of $P$ as defined above.

If $P$ is closed, then $\bar P = P$.
Note that a closed pinecone always admits
two particularly simple perfect matchings:
one consisting entirely of horizontal edges,
and the other consisting of the leftmost and rightmost vertical edges
in each row (and no other vertical edges)
along with some horizontal edges (Figure~\ref{fig:special-matchings}).
In particular, the rightmost vertical edges of a closed pinecone are
never forced nor forbidden.

\begin{figure}[htb]
\begin{center}
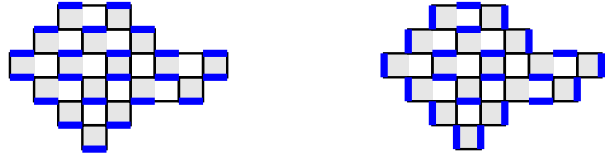
\end{center}
\caption{Two particularly simple matchings of a closed pinecone.}
\label{fig:special-matchings}
\end{figure}

Let $P$ be a pinecone with core $\bar P$.
There is a unique perfect matching of $P \setminus {\bar P}$
consisting exclusively of horizontal edges (see
Figure~\ref{fig:pinecone-shelling});
let $H$ be the edge set of this perfect matching.
Every perfect matching of $\bar P$ can be extended to a perfect matching of $P$
by adjoining the edges in $H$, so $m(P) \geq m({\bar P})$.
We now show that every perfect matching of $P$
is obtained from a perfect matching of $\bar P$ in this way.

\begin{propo}
\label{core-recursive}
Let $P$ be a pinecone with core $\bar P$.
Then $m({\bar P}) = m(P)$.
\end{propo}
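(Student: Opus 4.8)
The plan is to upgrade the inequality $m(P)\ge m(\bar P)$, already obtained by adjoining $H$ to matchings of $\bar P$, to an equality by showing that this injection is in fact a bijection. Concretely, the extension map $N\mapsto N\cup H$ sends perfect matchings of $\bar P$ to perfect matchings of $P$; it suffices to prove that every perfect matching $M$ of $P$ lies in its image, i.e.\ that $H\subseteq M$. Granting this, $M\mapsto M\setminus H$ is a two-sided inverse (its output matches exactly the vertices of $\bar P$, and by the construction of the core these residual edges are edges of $\bar P$), so the two matching numbers coincide. Thus the whole proposition reduces to the single assertion that every edge of $H$ is \emph{forced}, that is, contained in every perfect matching of $P$.

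I would prove this by induction on the number $|V|-|\bar V|$ of vertices of $P$ outside its core, where $\bar P=(\bar V,\bar E)$. If this number is $0$ then $P=\bar P$, $H=\varnothing$, and there is nothing to prove. Otherwise $P$ is not closed, so, as noted just before the statement, it has a vertex $v$ of degree $1$ on its right border; its unique neighbour $u$ lies immediately to its left, the edge $\{u,v\}$ is horizontal, and it must belong to every perfect matching of $P$ because $v$ can be matched in no other way. One checks that $u$ and $v$ are overhang vertices and that $\{u,v\}$ is one of the horizontal edges of $H$. Deleting $u$ and $v$ removes this forced pair, and any perfect matching $M$ of $P$ restricts to a perfect matching of the smaller graph $P\setminus\{u,v\}$; the cascade continues because removing $u,v$ may expose a fresh degree-$1$ vertex, either one step further left in the same row or in an adjacent row.

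The main obstacle is that $P\setminus\{u,v\}$ need not itself be a standard pinecone: shortening a single row by two can destroy the length ordering $L_{-j}<\cdots<L_0=L_1>\cdots>L_i$ required by condition~$1$, so the induction hypothesis does not apply verbatim. I would resolve this in one of two equivalent ways. Either I enlarge the class under consideration to those lattice subgraphs whose horizontal segments are left-justified and form a monotone staircase -- a class that still has a well-defined core and horizontal overhang matching, and that is visibly closed under deleting a forced degree-$1$ pair -- and run the same induction there; or I bypass intermediate graphs altogether and argue directly by a single right-to-left sweep. In the latter form, the work is done by two structural facts: each segment has odd length, so each row of the overhang contains an even number of vertices that the sweep pairs off horizontally; and by condition~$3$ the even vertical edges are always present while the odd ones are optional, which pins down where degree-$1$ vertices can occur and forces the sweep to proceed monotonically. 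Using the description of the core as the set of faces lying weakly to the left of the staircase of rightmost black squares $b_m$, one sees that the cascade pairs up exactly the overhang vertices, never matches a core vertex to an overhang vertex, and halts precisely when the remaining graph is the closed core $\bar P$. Carrying out this bookkeeping -- proving that the sweep is well defined, that every forced edge is horizontal and lies in $H$, and that it terminates exactly at $\bar P$ -- is the only real content; granting it, $H\subseteq M$ for every $M$, and the proposition follows.
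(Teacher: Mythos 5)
Your overall skeleton is the paper's: the reduction to showing that every perfect matching of $P$ contains the horizontal matching $H$ of $P\setminus\bar P$, and the cascade of deletions of forced degree-$1$ pairs working right-to-left, are exactly what the paper does, and you correctly identify the key obstruction, namely that deleting a single forced pair $\{u,v\}$ can destroy the pinecone structure. The gap lies in your two proposed repairs, neither of which is carried out and one of which fails as stated. The enlarged class you describe --- left-justified segments forming a ``monotone staircase'' --- is \emph{not} visibly closed under deleting a single forced pair: shortening row $b$ by two is precisely what can push its right end weakly left of the right end of the adjacent row, destroying the staircase monotonicity, and this happens exactly in the case where $v_1=(a-1,b+1)$ becomes a fresh degree-$1$ vertex. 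So you must either drop monotonicity and rebuild the theory of the core and of $H$ for that larger class (which you do not do), or use the device the paper actually employs and which your proposal never finds: delete the entire \emph{diagonal chain} $v_0=v, v_1=(a-1,b+1),\ldots,v_i$ in a single step (and, when $v$ lies on the longest row, both the upward diagonal $(a-j,1+j)$ and the downward diagonal $(a-j,-j)$ simultaneously). Grouping the deletions this way keeps every intermediate graph a genuine pinecone, so the induction runs entirely inside the original class and no new theory is needed.

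The second gap is your assertion that the sweep ``halts precisely at $\bar P$'' and ``never matches a core vertex to an overhang vertex.'' This is needed --- a priori the cascade could eat into $\bar P$ and terminate at a strictly smaller closed pinecone, in which case you would have proved $m(P)=m(Q)$ for the wrong graph $Q$ --- but you only gesture at the staircase description of the core without giving an argument. The paper closes this with a specific observation: every edge removed by the cascade is forced or forbidden, whereas the rightmost vertical edge in each row of a closed pinecone is \emph{neither}, as witnessed by the two explicit perfect matchings of any closed pinecone (the all-horizontal one, and the one using the leftmost and rightmost vertical edges of each row). Some argument of this kind is indispensable for knowing that the core of each reduced graph is still $\bar P$; the ``bookkeeping'' you defer at the end is not routine verification but the actual mathematical content of the proposition.
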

\begin{proof}
We will prove this claim by using a procedure 
that reduces a sub-pinecone $Q$ of $P$
to a smaller sub-pinecone with the same matching number.
Let $Q$ be a sub-pinecone of $P$ whose core coincides with $\bar P$. 
If $Q$ is not closed, 
then there must be at least one vertex of degree 1 
along the right boundary of $Q$.
Let $v=(a,b)$ be one of the the rightmost vertices of degree $1$ in $Q$. 
Then $v$ is the rightmost vertex in one of the rows of $Q$.  
Assume for the moment that $v$ lies strictly above the longest row of $Q$ 
(that is, $b > 1$). 
See the top part of Figure~\ref{fig:new-sequence} for an 
illustration of the following argument.
Let $u$ be the vertex to the left of $v$.
Then the edge joining $u$ and $v$ is forced to belong to every
perfect matching of $Q$,
while every other edge containing $u$ 
is forbidden from belonging to any perfect matching of $Q$.
Hence the graph $Q'$ obtained from $Q$
by deleting $u$, $v$, and every edge incident with $u$ or $v$
has the same matching number as $Q$.
Furthermore, $Q'$ is a pinecone, 
unless the vertex $v_1=(a-1,b+1)$ belongs to $Q$. 
In this case, $v_1$ has degree 1 in $Q'$.
Let $i$ be the largest integer such that 
$v_j=(a-j,b+j)$ belongs to $Q$ for all $0 \leq j \le i$.
Applying the deletion procedure to the vertices 
$v=v_0, v_1, \ldots , v_i$ (in this order) yields a pinecone $Q^*$.
Assume now that $b=1$.
Applying the deletion procedure to all the vertices of $Q$ 
of the form $(a-j,1+j)$ or $(a-j,-j)$
yields again a pinecone $Q^*$ 
(see Figure~\ref{fig:new-sequence}, bottom). 
By symmetry, we have covered all possible values of $b$. 

\begin{figure}[htb]
\begin{center}
\scalebox{0.85}{\input{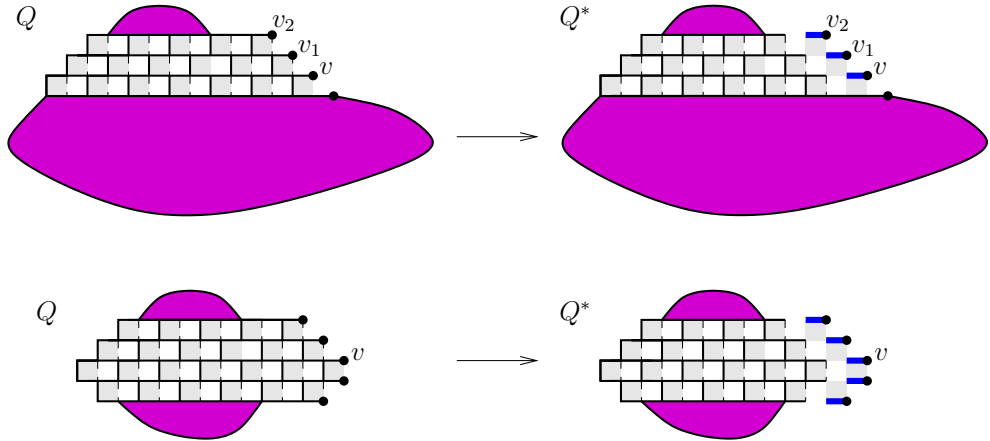}}
\end{center}
\caption{Some sequences of edge-deletions starting and ending with a pinecone.}
\label{fig:new-sequence}
\end{figure}

Observe that $m(Q)=m(Q^*)$.
Additionally, we can check that the core of $Q^*$ is $\bar P$.
The only thing we might worry about is that
in passing from $Q$ to $Q^*$,
we removed some edges that belong to $\bar P$. 
The examination of Figure~\ref{fig:pinecone-shelling} shows that 
we would have, in particular, 
removed the rightmost vertical edge is some row of $\bar P$.
However, this cannot happen, because the removed edges
were all forced or forbidden,
whereas the rightmost edges of $\bar P$
are neither forced nor forbidden (Figure~\ref{fig:special-matchings}).

To prove that $m({\bar P}) = m(P)$, take $Q=P$ 
and use the preceding operation repeatedly
to construct successively smaller graphs $Q^*$, $Q^{**}$, \dots
such that $m(P) = m(Q) = m(Q^*) = m(Q^{**}) = \cdots$
and $\bar P = \bar Q = 
\overline {Q^*} = \overline {Q^{**}}= \cdots\:$.
Eventually we arrive at a closed sub-pinecone of $P$
whose core is $\bar P$;
that is, we arrive at $\bar P$ itself.
And since each step of our construction
preserves $m(Q)$,
we conclude that $m({\bar P}) = m(P)$, as claimed.
\end{proof}

\subsection{A condensation theorem for closed pinecones}
\label{section:condensation-pinecones}
Let $P$ be a closed pinecone,
with longest row consisting of $2n+1$ squares.
Let $A$ be the smallest diamond graph containing $P$ 
(the longest row of $A$ contains exactly $2n+1$ cells).
Let $G$ denote the spanning subgraph of $A$
whose edge-set consists of all edges of $P$, 
all horizontal edges of $A$, 
and all \emm even, vertical edges of $A$ 
(Figure~\ref{fig:completion-pinecone}).  
Observe that $G$ is a pinecone. 
Moreover, among the spanning subgraphs of $A$ that are
pinecones and contain $P$, 
$G$ has strictly fewer edges than the others.  
Since no odd vertical edge is added, 
$P$ is actually the core of the pinecone $G$.

\begin{figure}[htb]
\begin{center}
\input{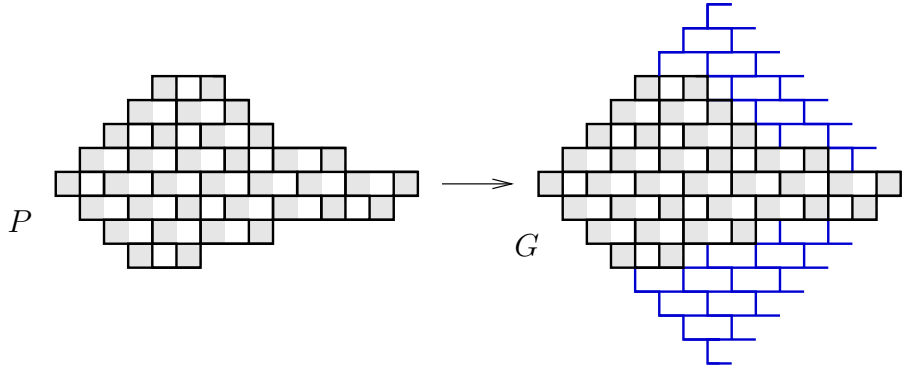}
\end{center}
\caption{Completing a pinecone $P$ into a spanning pinecone of an
Aztec diamond graph.}
\label{fig:completion-pinecone}
\end{figure}

Let us now use the notation of Corollary~\ref{coro:G}. That is,
$G_N=G\cap A_N$, and so on.
Then $G_N, G_S, G_W, G_E$ and $G_C$ are (standard or transplanted) pinecones.
Let $P^N$, $P^S$, $P^W$, $P^E$ and $P^C$ denote their respective cores.
(These are not to be confused with $P_N$, etc.,
which are the intersections of $P$ with $A_N$, etc.) 
We will often call $P^N$, $P^S$, $P^W$, $P^E$ and $P^C$ ``the five
sub-pinecones'' of $P$, even though, strictly speaking, $P$ admits
other sub-pinecones.
An example is given in Figure~\ref{fig:sub-pinecones}.  Let
$\ell_0$ (resp.~$r_0$) be the leftmost (resp.~rightmost)
cell of the longest row $R_0$ of $P$. Similarly, let 
$r_1$ (resp.~$r_{-1}$)
denote the rightmost cell of the row just above (resp.~below)
$R_0$. Observe that the cells $r_0, r_1$ and $r_{-1}$ correspond to
black squares of $P$. Finally, let $\ell'_0$ be the black \emph{cell} of
$R_0$ following $\ell_0$, and let $r'_0$ the black \emph{square} of $R_0$
preceding $r_0$ (if it exists).  
In light of the basic properties of the core (both the abstract
definition and the algorithmic construction), 
we can give the following alternative description of 
the five   sub-pinecones of $P$.

\begin{figure}[htb]
\begin{center}
\scalebox{0.9}{\input{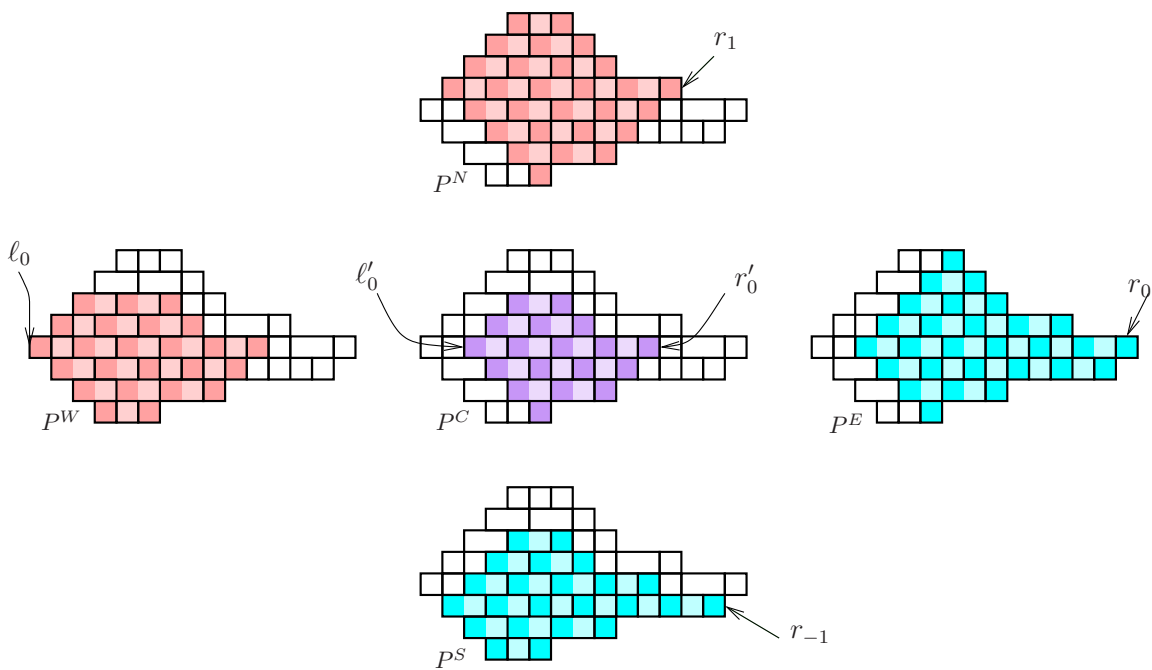}}
\end{center}
\caption{The five sub-pinecones of a pinecone $P$.}
\label{fig:sub-pinecones}
\end{figure}

\begin{propo}
\label{big-from-small}
Let $P$ be a closed pinecone. With the above notation, $P^N$ (resp.~$P^S$) is
the largest closed sub-pinecone of $P$ whose 
rightmost cell is $r_1$ (resp.~$r_{-1}$). Similarly, $P^W$ (resp.~$P^E$) is the
largest closed sub-pinecone whose 
rightmost (resp.~leftmost) cell is $r'_0$ (resp.~$\ell'_0$).
Finally, $P^C$ is the largest closed sub-pinecone rooted on $(\ell'_0, r'_0)$.
\end{propo}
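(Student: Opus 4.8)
The plan is to replace the five separately-defined cores by one uniform characterization, and then match it against the explicit shapes of the sub-diamonds. First I would prove the reformulation that, for every $X\in\{N,S,W,E,C\}$, the pinecone $P^X$ is \emph{the largest closed sub-pinecone of $P$ contained in $A_X$}. For the containment $P^X\subseteq P$: since $G$ arises from $P$ by adjoining only horizontal and even vertical edges, $G$ and $P$ have the same odd vertical edges, so the black squares of $G$, and \emph{a fortiori} those of $G_X=G\cap A_X$, are black squares of $P$; as $P^X=\overline{G_X}$ is closed and its black squares are among those of the closed pinecone $P$, we get $P^X\subseteq P$ (and trivially $P^X\subseteq A_X$). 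For maximality: if $Q$ is any closed sub-pinecone of $P$ with $Q\subseteq A_X$, then every edge of $Q$ is an edge of $P\subseteq G$ that also lies in $A_X$, so $Q\subseteq G\cap A_X=G_X$; being a closed sub-pinecone of $G_X$, it satisfies $Q\subseteq\overline{G_X}=P^X$. Thus $P^X$ is the maximum.

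Next I would pin down the five sub-diamonds in lattice coordinates, the crucial normalization being that, as $A$ is the \emph{smallest} diamond containing $P$, the longest row $R_0$ of $A$ equals that of $P$, so that $A$ and $P$ share the extreme cells $\ell_0,r_0$. Reading off the deletion rules, one finds that, as full diamond graphs: $A_N$ (resp.\ $A_S$) has for longest row the row just above (resp.\ below) $R_0$, still spanning the full width it has in $A$, so its right frontier is weakly to the right of $r_1$ (resp.\ $r_{-1}$); $A_W$ (resp.\ $A_E$) keeps $R_0$ as longest row but moves its right (resp.\ left) frontier inward by one black step, landing on $r'_0$ (resp.\ $\ell'_0$); and $A_C$ keeps $R_0$ with both frontiers moved in, to $\ell'_0$ and $r'_0$. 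By the reformulation, Proposition~\ref{big-from-small} reduces to matching the condition ``$Q\subseteq A_X$'' with the stated condition on the extreme cell(s) of $Q$.

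I would then carry out this matching by mutual containment, treating $X=N$ in detail. In one direction, $P^N$ is contained in $A_N$, has the row above $R_0$ as its longest row, and its rightmost cell is the rightmost black square of $P$ in that row; since $A_N$ contains all of that row of $P$, this cell is exactly $r_1$, so $P^N$ is a closed sub-pinecone of $P$ with rightmost cell $r_1$ and hence lies in $R_N$, the largest such. In the other direction, $R_N\subseteq P\subseteq A$ has the row above $R_0$ as longest row (it contains $r_1$) and right frontier $r_1$; since the frontiers of a closed pinecone recede by at least one column per row as one leaves the longest row, while those of the full diamond $A_N$ recede by exactly one, the whole staircase of $R_N$ stays inside $A_N$, giving $R_N\subseteq A_N$ and so $R_N\subseteq P^N$. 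Hence $P^N=R_N$. The case $S$ follows from $N$ by the reflection $y\mapsto 1-y$, which preserves the pinecone structure and exchanges the two rows adjacent to $R_0$; the cases $W$, $E$, $C$ run through the same two-sided containment, now controlling the right frontier, the left frontier, or both.

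The hard part will be precisely this last containment $R_X\subseteq A_X$, i.e.\ showing that the tapered boundary of the sub-diamond dominates the staircase boundary of \emph{every} closed sub-pinecone of $P$ carrying the prescribed extreme cell. This is where two quantitative facts must be made precise and combined: the exact longest row and the exact one-column-per-row recession of each $A_X$, and the inequality that a closed pinecone's frontier recedes by at least one column per row, with the smallest-diamond normalization ensuring that the sub-diamond frontiers fall exactly on $r_1,r_{-1},r'_0,\ell'_0$. I also expect to have to dispose of the degenerate cases in which some step of the rightmost-black-square staircase, or the square $r'_0$, is undefined, or in which $P$ collapses to a single row; these should reduce to the convention that the empty graph is a closed pinecone with a single (empty) matching.
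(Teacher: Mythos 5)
Your uniform reformulation --- ``$P^X$ is the largest closed sub-pinecone of $P$ contained in $A_X$'' --- is false, and the step that leads you to it misreads the universal property of the core. In the paper, the core of a pinecone $R$ is defined as the largest closed \emph{standard} sub-pinecone, i.e.\ the union of the closed sub-pinecones \emph{sharing $R$'s leftmost cells}; it is not an upper bound for all closed sub-pinecones of $R$, since a transplanted closed sub-pinecone rooted elsewhere can escape it. So your inference ``$Q\subseteq G\cap A_X=G_X$, hence $Q\subseteq\overline{G_X}=P^X$'' is invalid. Concretely, take $P$ of width $9$ with cell-rows: row $0$ spanning columns $0$--$8$ with black squares at columns $4$ and $8$; row $1$ spanning columns $1$--$3$ with black square at column $3$; row $-1$ spanning columns $1$--$7$ with black square at column $7$. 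Then $r_1$ is the cell at column $3$ of row $1$, and the core construction applied to $G_N$ gives $P^N=$ row $1$, columns $1$--$3$, with \emph{no} cells in row $0$ (no black square of $G_N$ in row $0$ lies strictly left of column $3$). But the single black square of $P$ at (row $0$, column $4$) lies inside $A_N$ (whose row $0$ spans columns $2$--$6$) and is a closed sub-pinecone of $P$ not contained in $P^N$; moreover no closed sub-pinecone of $P$ inside $A_N$ contains both it and $P^N$, since closedness forces the rightmost black squares to recede strictly as one leaves the longest row, while here column $4>3$. So ``the largest closed sub-pinecone of $P$ contained in $A_N$'' does not even exist, and containment in $A_X$ alone does not characterize $P^X$. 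The extreme-cell conditions in the statement (rightmost cell $r_1$, etc.) are not cosmetic: they pin down the longest row of the competing sub-pinecones, and with the monotonicity of closed pinecones they confine all other rows to the staircase descending from that cell --- exactly the stray configurations your reformulation lets in.

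The same flaw resurfaces in your reverse containment for $X=N$: from ``$R_N\subseteq A_N$'' you conclude ``$R_N\subseteq P^N$'', which again uses the false domination. This particular step is repairable: the largest closed sub-pinecone of $P$ with rightmost cell $r_1$ has longest row equal to row $1$ extending left to column $1$, hence shares the root $(1,1)$ of $G_N$, so it is (up to the translation in the definition of the core) a \emph{standard} closed sub-pinecone of $G_N$ and is therefore contained in $\overline{G_N}$ --- but you must argue this root identification rather than route through $A_N$. Note that the paper needs no ``contained in $A_X$'' lemma at all: it reads the proposition off the algorithmic construction of the core applied to $G_X$ --- the first selected black square of $G_X$ is exactly $r_1$ (resp.\ $r_{-1}$, $r'_0$; and the left end is $\ell'_0$ for the $E$ and $C$ cases), and the subsequent selections see only black squares of $P$, the $A_X$-constraint never binding again because, as you correctly observe in your ``hard part,'' the staircase of rightmost black squares recedes by at least one column per row while the boundary of $A_X$ recedes by exactly one. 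That recession comparison, your coordinate computations for the $A_X$, and your treatment of degenerate cases are all sound; it is the pivot through the nonexistent maximum that must be removed.
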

This proposition implies that a pinecone $P$ that is neither empty,
nor reduced to a black square can be reconstructed from its four
main sub-pinecones $P^N$, $P^S$, $P^E$ and $P^W$. Indeed, the part of
$P$ located strictly above its longest row coincides 
with the top part of $P^N$. More precisely, row $r$ of $P$, with $r>0$
coincides with row $r-1$ of $P^N$. 
Similarly, 
for $r<0$, row $r$ of $P$ coincides with row $r+1$ of $P^S$. It thus
remains to determine the longest row of $P$.
This row is obtained by adding a $2$-by-$1$ rectangle\footnote{This
 rectangle is actually only useful if $P^W$ is empty or reduced to a
 single black square.} to the left of the longest row of $P^E$, and
then superimposing the longest row of $P^W$.

Let us now apply Corollary~\ref{coro:G} 
to the graph $G$ obtained by completing $P$ into a
spanning pinecone of $A$
(Figure~\ref{fig:completion-pinecone}). By Proposition~\ref{core-recursive},
since $P$ is the 
core of $G$, $m(G)=m(P)$, and similar identities relate the matching
numbers of $G_N$ and $P^N$, etc. 
\begin{Theorem}[{\bf Condensation for closed pinecones}]
\label{thm:condensation}
The matching number of a closed pinecone $P$ is related to the matching
number of its 
 closed sub-pinecones by
$$
m(P) m(P^C) = m(P^W) m(P^E) +
m(P^N) m(P^S). 
$$ 
\end{Theorem}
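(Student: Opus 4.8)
The plan is to obtain the identity as a specialization of Corollary~\ref{coro:G} applied to the completed graph $G$, and then to replace every matching number of a sub-diamond of $G$ by the matching number of the corresponding sub-pinecone using Proposition~\ref{core-recursive}. Almost all of the geometric content has already been assembled above: $G$ is a spanning subgraph of the smallest diamond graph $A$ containing $P$; the five sub-diamonds $G_N,G_S,G_W,G_E,G_C$ are again pinecones; and $P,P^N,P^S,P^W,P^E,P^C$ are by definition the cores of $G,G_N,G_S,G_W,G_E,G_C$ respectively. So what remains is essentially a bookkeeping argument together with one applicability check.

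First I would verify that Corollary~\ref{coro:G} applies to $G$, i.e.\ that the spanning subgraph $G$ of $A$ contains the four extreme edges $n,s,w,e$. The top and bottom edges $n$ and $s$ are horizontal, so they lie in $G$ because $G$ contains every horizontal edge of $A$. The west and east edges $w$ and $e$ are vertical; each of them lies in $G$ because it is either an even edge, hence present in $G$ by construction, or an edge of the closed pinecone $P$, which is contained in $G$. (The east edge is the only genuinely delicate case: being the rightmost vertical edge of the longest row it is odd, but since $A$ is the \emph{smallest} diamond containing $P$, the longest rows of $P$ and of $A$ have the same horizontal extent, and the rightmost face of the longest row of the closed pinecone $P$ is a black square, so its right edge belongs to $P\subseteq G$.)

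With the hypotheses verified, Corollary~\ref{coro:G} yields
$$
M(G)\,M(G_C)=ns\,M(G_W)M(G_E)+ew\,M(G_N)M(G_S).
$$
I would then set every edge indeterminate equal to $1$. This turns each matching polynomial into the corresponding matching number and collapses the coefficients $ns$ and $ew$ to $1$, giving
$$
m(G)\,m(G_C)=m(G_W)m(G_E)+m(G_N)m(G_S).
$$
Finally I would apply Proposition~\ref{core-recursive} once to each of the six pinecones: since $P$ is the core of $G$ we have $m(G)=m(P)$, and since $P^C,P^W,P^E,P^N,P^S$ are the cores of $G_C,G_W,G_E,G_N,G_S$ we likewise have $m(G_C)=m(P^C)$, $m(G_W)=m(P^W)$, and so on. Substituting these equalities produces exactly $m(P)m(P^C)=m(P^W)m(P^E)+m(P^N)m(P^S)$.

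I expect the only step requiring genuine (if brief) argument to be the applicability check in the second paragraph, and within it the presence of the odd east edge in $G$; the heavier structural facts — that the five sub-diamonds of $G$ are pinecones whose cores are the advertised sub-pinecones, and that passing to the core leaves the matching number unchanged — are already available from the preceding results, so the theorem itself is a short deduction.
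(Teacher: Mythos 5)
Your proposal is correct and follows essentially the same route as the paper, which likewise obtains the theorem by applying Corollary~\ref{coro:G} to the spanning pinecone $G$ and then using Proposition~\ref{core-recursive} to replace $m(G), m(G_C), \ldots$ by the matching numbers of their cores $P, P^C, \ldots$. Your explicit check that $G$ contains the four extreme edges $n,s,w,e$ (in particular the odd east edge, present because the rightmost face of the longest row of the closed pinecone $P$ is a black square) is a hypothesis the paper leaves implicit, and it is verified correctly.
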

\noindent 
We will state in Section~\ref{sec:bivariate} a more general
condensation result
dealing with the matching polynomial, rather than the matchings
number, of closed pinecones (Theorem~\ref{thm:condensation-complete}).

\section{Pinecones for the Gale-Robinson sequences}\label{sec:pinecones-GR}

The pinecones introduced in the previous section 
generalize Aztec diamond graphs.
The number of perfect matchings of the diamond graph of 
width $2n-3$ is the $n$th term in the recurrence
\begin{eqnarray*}
a(n)a(n-2) = a(n-1)a(n-1) + a(n-1)a(n-1),
\end{eqnarray*}
with initial conditions $a(0)=a(1)=1$.
More generally, the three-term Gale-Robinson sequences are
governed by recurrences of the form 
\beq
\label{GR-rec}
a(n)a(n-m) = a(n-i)a(n-j) + a(n-k)a(n-\ell),
\eeq
with initial conditions $a(n)=1$ for $n=0, 1, \ldots, m-1$.  Here, $i,
j, k$ and 
$\ell$ are positive integers such that $i+j=k+\ell=m$,
and we adopt the following (important) convention
$$
j=\min\{i,j,k,\ell\}.
$$

Our purpose in this section is to construct a sequence of (closed) pinecones
$(P(n))_{n\ge 0} \equiv \left(P(n;i,j,k,\ell)\right)_{n\geq 0}$ 
for each set of parameters $\{i,j,k,\ell\}$ such that $i+j=k+\ell=m$,
and to show that the matching numbers of the pinecones in our 
sequence satisfy the corresponding Gale-Robinson recurrence.
More specifically, our family of graphs will be constructed
in such a way that
\begin{itemize}
\item $P(n)^C$ is $P(n-m)$ transplanted to $(2,0)$ 
(that is, shifted two steps to the right),
\item $P(n)^W$ is $P(n-i)$,
\item $P(n)^E$ is $P(n-j)$ transplanted to $(2,0)$,
\item $P(n)^N$ is $P(n-k)$ transplanted to $(1,1)$, and
\item $P(n)^S$ is $P(n-\ell)$ transplanted to $(1,-1)$.
\end{itemize}
In our construction, we use the fact that a closed pinecone is
completely determined by its set of odd vertical edges, that is,
vertical edges of the form $\{(a,b),(a,b+1)\}$ where $a+b$ is odd.
We introduce two functions, an upper function $U$ and 
a lower function $L$, which will be used to determine
the positions of the odd vertical edges 
in the (closed) pinecone $P(n;i,j,k,\ell)$: for $r \ge 0$, let 
\beq
\label{UL-def}
\begin{array}{lll}
U(n,r,c) &= &2c + r -3 - 2 \left\lfloor{\displaystyle \frac{mc+kr+
    i-n-1}{j}}\right\rfloor , \\
\\
L(n,r,c) &=& 2c + r -3 - 2 
\left\lfloor{\displaystyle \frac{mc+\ell r+ i-n-1}{j}}\right\rfloor.
\end{array}
\eeq
Observe that the parameters $k$ and $\ell$ play symmetric
roles. Also, $U(n,0,c)=L(n,0,c)$. The 
function $U$ will describe the upper part of the pinecone, while $L$
will describe its lower part. Recall that, by convention, the longest 
row of a standard pinecone
is row $0$ and its South-West corner lies at coordinates $(0,0)$, 
as shown in Figure~\ref{fig:pinecone}.

To locate the vertical odd edges in row $r \geq 0$, calculate the values 
$U(n,r,c)$ for $c=0,1,\ldots$  This will be a (strictly) decreasing sequence,
since $m \geq 2j$ (recall that $i+j=m$ and $j \leq i$).
Retain those values $U(n,r,c)$ that are larger than $r$, 
and place a vertical edge in the $r$th row at abscissa $U(n,r,c)$, 
that is, an edge connecting $(U(n,r,c),r)$ and $(U(n,r,c),r+1)$
(an odd edge, since $U(n,r,c)+r$ is odd).
The first row not containing such an edge (and therefore not
included in the pinecone) is the first one for which
$U(n,r,0) < r$. Observe that $U(n,r,0)-r$ is a decreasing function of
$r$ (since $j\le k$).  This property guarantees that if the $r$th row is empty,
then all higher rows are empty too. It also implies that the rightmost
vertical edge in row $r$ (which is located at abscissa $U(n,r,0)$)
lies to the right of the rightmost vertical edge in row $r+1$.
(To see this, note that $U(n,r,0)-r$ is always an odd number.
So the inequality $U(n,r+1,0)-(r+1) < U(n,r,0)-r$
implies $U(n,r+1,0)-(r+1) \leq U(n,r,0)-r-2$,
or $U(n,r+1,0) < U(n,r,0)$.
That is, the set of odd edges (equivalently, of black squares) given 
by Formula~(\ref{UL-def})
satisfies the ``top part'' of the monotonicity condition described at the end of
Section~\ref{sec:pinecones-def}: the rightmost odd edge in row $r>0$,
if it exists, lies to the left of the rightmost odd edge in row $r-1$.

Similarly, to locate the edges in row $-r \leq 0$, calculate the
values of $L(n,r,0) > L(n,r,1) > \cdots$ and retain those larger
than $r$.  For each, place a vertical edge in row $-r$ at abscissa
$L(n,r,c)$, that is, connecting 
$(L(n,r,c),-r)$ and $(L(n,r,c),-r+1)$.
Observe that $L(n,0,c)=U(n,0,c)$, so that the collection of odd
vertical edges in row $0$ is the same whether it is determined from $U$
or from $L$.

\medskip
The monotonicity properties satisfied by the positions of the
odd edges imply that there exists a unique standard closed pinecone
whose set of odd vertical edges coincides with the set we have
constructed via the functions $U$ and $L$. 
To obtain this pinecone,
draw horizontal edges from $(r,r+1)$ to $(U(n,r,0),r+1)$ and
from $(-r,-r)$ to $(L(n,r,0),-r)$ for all $r \geq 0$.  Finally, place
all the appropriate even vertical edges.  Since these steps are
so routine, we regard the pinecone as fully described once the set
of odd vertical edges has been specified.  This point of view 
simplifies the exposition.

Observe that $P(n)$ is empty if and only if $U(n,0,0)<0$, 
which is equivalent to $U(n,0,0) \le -1$
(since $U(n,0,0)$ is odd),
which is easily seen to be equivalent to $n<m$
(using the fact that $m=i+j$).
\smallskip

\noindent
{\bf Example.}
Take $(i,j,k,\ell)=(5,2,3,4)$ and determine
$P(12)$. The above definition of $U$ and $L$ specializes to
\begin{eqnarray*}
U(n,r,c) &=& 2c + r -3 - 2 \left\lfloor{\frac{7c+3r-8}{2}}\right\rfloor , \\
L(n,r,c) &=& 2c + r -3 - 2 \left\lfloor{\frac{7c+4r-8}{2}}\right\rfloor.
\end{eqnarray*}
In row $0$, we find odd edges with lower vertices $(5,0)$ and
$(1,0)$.  In row $1$, there is one odd edge at $(4,1)$.  This is the
top row of the diagram because $U(12,2,0)=1<2$.  Turning to the
lower portion of the diagram, there is one odd edge with lower
vertex $(2,-1)$ and none in row $-2$ or below.  Completing the 
diagram is now routine, and gives the pinecone $P(12)$ which is shown in
Figure~\ref{fig:construction-example}, together with its 14 perfect
matchings. Accordingly, the Gale-Robinson sequence $a(n)$ associated with
$(5,2,3,4)$ satisfies $a(12)=14$.

A larger example is presented after Corollary~\ref{GR-coro}.
\begin{figure}[hbt]
\begin{center}
\input{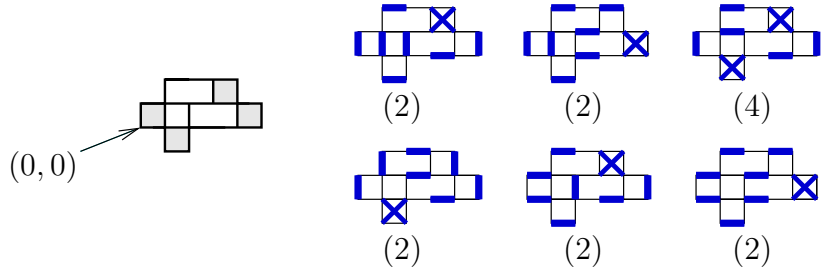}
\end{center}
\caption{The pinecone $P(12; 5, 2, 3,4)$, with black squares
  indicated, and its 14 perfect matchings 
  (a cross stands for any of the two matchings of a square).}
\label{fig:construction-example}
\end{figure}

The pinecones based on the functions $U$ and $L$ 
satisfy a remarkable property: the odd edges (or,
equivalently, 
the black squares) in rows $r$ and $r+1$ are \emm interleaved,. 
That is, between two black squares in row $r\ge 0$, there is a black square
in row $r+1$, and similarly, between two black squares in row $r\le
0$, there is a black square in row $r-1$. 
This can be checked on the small example of
Figure~\ref{fig:construction-example}, but is more visible on the
bigger example of Figure~\ref{big-example}.
\begin{Lemma}[{\bf The interleaving property}]
For all values of $n, r$ and $c$, the functions $U$ and $L$ defined
by~{\em\Ref{UL-def}} satisfy 
$$
U(n,r,c+1)+1\le U(n,r+1,c)\le U(n,r,c)-1
$$
and
$$
L(n,r,c+1)+1\le L(n,r+1,c)\le L(n,r,c)-1.
$$
\end{Lemma}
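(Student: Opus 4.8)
The plan is to observe that $U$ and $L$ arise from the same formula with the roles of $k$ and $\ell$ interchanged, so it suffices to prove the two inequalities for $U$; those for $L$ then follow verbatim after swapping $k$ and $\ell$. Throughout, the one structural input is the convention $j=\min\{i,j,k,\ell\}$, which supplies the two facts $k\ge j$ and $\ell=m-k\ge j$ that drive the estimates. I would abbreviate $A:=mc+kr+i-n-1$ and write $f(r,c):=\lfloor(mc+kr+i-n-1)/j\rfloor$, so that $U(n,r,c)=2c+r-3-2f(r,c)$, and reduce everything to the elementary monotonicity fact that $\lfloor(x+a)/j\rfloor-\lfloor x/j\rfloor\ge 1$ whenever $a\ge j$ (since $\lfloor(x+a)/j\rfloor\ge\lfloor(x+j)/j\rfloor=\lfloor x/j\rfloor+1$).

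For the right-hand inequality I would compute the difference $U(n,r+1,c)-U(n,r,c)=1-2\bigl(f(r+1,c)-f(r,c)\bigr)$, so that $U(n,r+1,c)\le U(n,r,c)-1$ is equivalent to $f(r+1,c)-f(r,c)\ge 1$. As $r$ increases by one, the numerator of $f$ increases by exactly $k$, and since $k\ge j$ the monotonicity fact gives $f(r+1,c)-f(r,c)\ge 1$ at once.

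For the left-hand inequality the same bookkeeping gives $U(n,r+1,c)-U(n,r,c+1)=-1+2\bigl(f(r,c+1)-f(r+1,c)\bigr)$, so $U(n,r,c+1)+1\le U(n,r+1,c)$ is equivalent to $f(r,c+1)-f(r+1,c)\ge 1$. Here the numerator of $f(r,c+1)$ exceeds that of $f(r+1,c)$ by $m-k=\ell$, and since $\ell\ge j$ the same estimate yields $\lfloor(A+m)/j\rfloor\ge\lfloor(A+k+j)/j\rfloor=\lfloor(A+k)/j\rfloor+1$, i.e.\ $f(r,c+1)-f(r+1,c)\ge 1$.

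Finally, the inequalities for $L$ follow by replacing $k$ by $\ell$ throughout: the right-hand one then rests on $\ell\ge j$ and the left-hand one on $m-\ell=k\ge j$, both again guaranteed by $j=\min\{i,j,k,\ell\}$. I do not expect any genuine obstacle here---the argument is a short computation---so the only thing to get right is the bookkeeping of which of $k$, $\ell$, $m-k$, $m-\ell$ controls which inequality, and the observation that the minimality of $j$ is precisely what makes all four of these quantities at least $j$.
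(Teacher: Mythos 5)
Your proof is correct and follows essentially the same route as the paper: reduce the $L$-inequalities to the $U$-inequalities by the $k\leftrightarrow\ell$ symmetry, then express each inequality as a difference of two floors whose arguments differ by $k$, $\ell$, or $m-k=\ell$, each at least $j$ by the convention $j=\min\{i,j,k,\ell\}$. The paper carries out this exact computation for the inequality $U(n,r,c+1)+1\le U(n,r+1,c)$ and leaves the remaining cases to the reader, so your write-up is just a slightly more complete version of the same argument.
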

\begin{proof} 
We have
$$
U(n,r+1,c)-U(n,r,c+1) = 2 \left\lfloor
\frac{mc+kr+i-n-1+m}{j} \right\rfloor - 2 \left\lfloor
\frac{mc+kr+i-n-1+k}{j} \right\rfloor -1.
$$
But
$$
\frac{mc+kr+i-n-1+m}{j}-\frac{mc+kr+i-n-1+k}{j}= \frac \ell j \ge 1,
$$
so that the two floors occurring in the above identity differ by 1 at
least. Consequently,
$$
U(n,r+1,c)-U(n,r,c+1) \ge 2 -1=1.
$$
The three other inequalities are proved in a similar manner.
\end{proof}

\medskip
We now wish to apply the condensation theorem
(Theorem~\ref{thm:condensation}) to the 
pinecones $P(n)$ we have just defined. 
Using the notation of Theorem~\ref{thm:condensation}, we will
verify that, up to translation,   
$P(n)^W = P(n-i)$, $P(n)^E = P(n-j)$, $P(n)^N = P(n-k)$, 
$P(n)^S = P(n-\ell )$ and $P(n)^C = P(n-m)$.  These equivalences
will follow from the interleaving property and the following algebraic
equalities.

\begin{Lemma} 
\label{lemma:arithmetic}
For any choice of parameters $(i,j,k,\ell)$, the functions
  $U$ and $L$ defined by~{\em\Ref{UL-def}} satisfy: 
$$\begin{array}{rcllrcl}
U(n-i,r,c-1) & = &  U(n,r,c),      &  &  L(n-i,r,c-1) & = & L(n,r,c), \\
U(n-j,r,c) & = & U(n,r,c) - 2,     &  &  L(n-j,r,c) & = & L(n,r,c) - 2, \\
U(n-k,r-1,c) & = & U(n,r,c) - 1,   & \hskip 4mm &  L(n-\ell,r-1,c) & = & L(n,r,c) - 1,  \\
U(n-\ell,r+1,c-1) & = & U(n,r,c) - 1, &  &  L(n-k,r+1,c-1) & = & L(n,r,c) - 1,\\ 
U(n-m,r,c-1) & = & U(n,r,c) - 2,   &  &  L(n-m,r,c-1) & = & L(n,r,c) - 2. \\
\end{array}$$
\end{Lemma}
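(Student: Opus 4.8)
The plan is to exploit the fact that each of the ten identities shifts the arguments of $U$ (or $L$) so as to change the function value by a fixed integer $\delta\in\{0,-1,-2\}$, and that the only nonlinear ingredient is a single floor. First I would write $U(n,r,c)=(2c+r-3)-2\lfloor\Phi\rfloor$, where $\Phi=\Phi(n,r,c)=(mc+kr+i-n-1)/j$ is the argument of the floor, and similarly $L(n,r,c)=(2c+r-3)-2\lfloor\Psi\rfloor$ with $\Psi=(mc+\ell r+i-n-1)/j$. Each identity then reduces to two elementary computations: the change in the affine part $2c+r-3$ under the prescribed shift of $(n,r,c)$, which is manifestly an integer; and the change in the floor argument, which I claim is always an integer, so that it can be pulled out of the floor via $\lfloor x+t\rfloor=\lfloor x\rfloor+t$ for $t\in\zs$.

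The heart of the matter is to verify, case by case, that under each substitution the numerator of $\Phi$ (respectively $\Psi$) changes by an exact multiple of $j$; here the two defining relations $m=i+j$ and $k+\ell=m$ do all the work. The three shifts that leave the floor argument \emph{unchanged} are the cleanest. For $(n-k,r-1,c)$ the numerator changes by $-k$ (from the $kr$ term) plus $+k$ (from the $-n$ term), hence by $0$, while the affine part drops by $1$, giving $U(n-k,r-1,c)=U(n,r,c)-1$. For $(n-\ell,r+1,c-1)$ the numerator changes by $-m+k+\ell=0$ (using $k+\ell=m$), with the affine part again dropping by $1$. For $(n-m,r,c-1)$ it changes by $-m+m=0$, with the affine part dropping by $2$. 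In the two remaining cases the floor argument moves by $\pm1$: for $(n-i,r,c-1)$ the numerator changes by $-m+i=-j$ (using $m=i+j$), so $\lfloor\Phi\rfloor$ drops by $1$ and the two contributions of $-2$ cancel to give $\delta=0$; for $(n-j,r,c)$ the numerator changes by $+j$, raising $\lfloor\Phi\rfloor$ by $1$ and yielding $\delta=-2$.

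The five $L$-identities follow by the identical computation with $k$ and $\ell$ interchanged: $L$ carries $\ell r$ in place of $kr$, so the row-shift $r\to r-1$ must now be compensated by $n\to n-\ell$ (rather than $n-k$), and $r\to r+1$ by $n\to n-k$ — exactly the pattern displayed in the right-hand column of the statement. In each of the ten rows the floor argument moves by an integer and is extracted, so the identity is just the sum of the affine increment and the pulled-out increment, which evaluates to the stated $\delta$.

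I expect no serious obstacle here; the only thing requiring care is bookkeeping, namely correctly tracking how each of the three coordinate shifts feeds into the single numerator $mc+kr+i-n-1$ (respectively $mc+\ell r+i-n-1$), and checking in each case that the crucial cancellation is furnished by $m=i+j$ or by $k+\ell=m$. Once one tabulates the numerator increment and the affine increment for each of the ten substitutions, the result is immediate.
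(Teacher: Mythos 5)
Your proposal is correct and follows essentially the same route as the paper's proof: direct substitution into the defining formula~\eqref{UL-def}, observing that each shift changes the floor's numerator by an exact multiple of $j$ (the cancellations coming from $m=i+j$ and $m=k+\ell$), and invoking the $k\leftrightarrow\ell$ symmetry to reduce the ten identities to five. The only difference is one of completeness: the paper verifies a single representative case ($U(n-\ell,r+1,c-1)$) and leaves the rest to the reader, whereas you tabulate all five numerator increments explicitly, all of which check out.
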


\noindent
\begin{proof}
The $L$-identities are symmetric to the
$U$-identities upon exchanging $k$ and $\ell$, so that there are
really 5 identities to prove.
These can all be verified by routine algebraic manipulations. Let us check
for instance the fourth identity satisfied by $U$:
\begin{eqnarray*}
U(n-\ell,r+1,c-1) &=& 2(c-1) + (r+1) -3 - 2
\left\lfloor{\frac{m(c-1)+k(r+1) + i-(n-\ell )-1}{j}}\right\rfloor \\
&=& 2c + r -4 - 2 
    \left\lfloor{\frac{mc +kr +i -n -1 -m+k+\ell}{j}}\right\rfloor \\
&=& 
2c + r -4 - 2 \left\lfloor{\frac{mc +kr +i -n -1}{j}}\right\rfloor
\hskip 12mm \hbox{since } m=k+\ell\\
&=& U(n,r,c) - 1.
\end{eqnarray*}
We leave it to the reader to verify the remaining 4 identities. 
\end{proof}

We now check that these identities imply that the pinecones are related
to one another as claimed.

\begin{propo} \label{lemma:GRpine}  
Let $P(n)\equiv P(n;i,j,k,\ell)$ be the sequence of pinecones
associated with the parameters $(i,j,k,\ell)$. Then for $n \ge m$, the
five closed sub-pinecones of $P(n)$ satisfy
\begin{eqnarray*}
P(n)^W = P(n-i), &\quad & P(n)^E = P(n-j), \\ 
P(n)^N = P(n-k),& \quad& P(n)^S = P(n-\ell),
\end{eqnarray*}
and
$$
 P(n)^C = P(n-m).
$$
These identities hold up to a translation.
\end{propo}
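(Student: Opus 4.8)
The plan is to exploit the fact, recalled just before the statement, that a closed pinecone is completely determined by its set of odd vertical edges. Each of the five claimed identities $P(n)^X = P(n-x)$ will therefore be reduced to a comparison of odd-edge sets: after applying the translation prescribed at the start of the section (namely $(0,0)$ for $W$, $(2,0)$ for $E$ and $C$, $(1,1)$ for $N$, and $(1,-1)$ for $S$), I will show that $P(n)^X$ and the translate of $P(n-x)$ carry exactly the same odd edges. For this I compute the odd-edge set of $P(n)^X$ in two independent ways and check they agree: once from the geometric characterization in Proposition~\ref{big-from-small}, and once from Formula~\eqref{UL-def} applied to $P(n-x)$.

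First I would record that in row $r\ge 0$ the odd edges of $P(n)$ sit at the abscissas $U(n,r,c)$ with $U(n,r,c)>r$, and symmetrically via $L$ in the rows below $0$. The computational engine is Lemma~\ref{lemma:arithmetic}: its identities group into a $U$/$L$ pair for each of the five shifts $n-i,\ldots,n-m$, and substituting the pair featuring $n-x$ into \eqref{UL-def} (and then applying the prescribed translation) turns each abscissa $U(n-x,\cdot,\cdot)$ or $L(n-x,\cdot,\cdot)$ into one of the form $U(n,r,c)$ or $L(n,r,c)$. Concretely, the identities featuring $n-i$ govern $P^W$, those featuring $n-j$ govern $P^E$, those featuring $n-k$ govern $P^N$, those featuring $n-\ell$ govern $P^S$, and those featuring $n-m$ govern $P^C$. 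In every case the translated odd edges of $P(n-x)$ thus land on a subset of the odd edges of $P(n)$.

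It then remains to match this subset against Proposition~\ref{big-from-small}. For $P^W=P(n-i)$ (no translation) the identity $U(n-i,r,c-1)=U(n,r,c)$ shows that the surviving edges in row $r$ are the $U(n,r,c)$ with $c\ge 1$, i.e.\ one deletes the rightmost odd edge in each row; this is exactly the largest closed sub-pinecone whose rightmost cell is $r'_0$. For $P^E=P(n-j)$ and $P^C=P(n-m)$ the translation by $(2,0)$ combined with the $-2$ appearing in the respective identities deletes the innermost odd edge (the one at abscissa $r+1$) in each row --- and, for $P^C$, the rightmost one as well, since its identity additionally shifts $c$ --- reproducing the leftmost-cell condition $\ell'_0$ (and the rooting on $(\ell'_0,r'_0)$). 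For $P^N=P(n-k)$ and $P^S=P(n-\ell)$ the translation by $(1,\pm1)$ couples a unit row-shift with a unit abscissa-shift; here I would verify that the rows of $P$ strictly above (resp.\ below) row $0$ are reproduced unchanged, that row $0$ is split between $P^N$ and $P^S$, and that the rows on the opposite side each lose their rightmost edge, matching the largest closed sub-pinecones with rightmost cells $r_1$ and $r_{-1}$.

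Throughout, the interleaving property is exactly what certifies that these edge sets obey the monotonicity condition of Section~\ref{sec:pinecones-def}, so that they genuinely define closed sub-pinecones, and the maximal ones with the prescribed boundary cell. I expect the main obstacle to be not any individual identity but the uniform bookkeeping of the cutoff $U(n,r,c)>r$ under the simultaneous shifts of $r$ and $c$: one must confirm that after translation the surviving rows, and the surviving edges within each row, match those prescribed by Proposition~\ref{big-from-small} with no off-by-one slippage at the extreme rows or at the innermost and rightmost edges (and that degenerate cases, where $P(n-x)$ is empty because $n-x<m$, correctly force the corresponding sub-pinecone to be empty). The cases $P^N$ and $P^S$, where $U$ and $L$ exchange roles across row $0$, are where this is most delicate.
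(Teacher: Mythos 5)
Your plan coincides with the paper's own proof: both reduce each identity to a comparison of odd-vertical-edge sets, describing the odd edges of $P(n)^X$ via Proposition~\ref{big-from-small} together with the interleaving property, and the odd edges of the translated $P(n-x)$ via Formula~\eqref{UL-def} combined with the corresponding pair of identities in Lemma~\ref{lemma:arithmetic}. The cutoff bookkeeping you flag as the delicate point (e.g.\ the threshold becoming $>r+2$ in the rows of $P(n)^S$ at nonnegative ordinates, where the rightmost edge is also lost) is precisely what the paper verifies in its two worked cases $P(n)^W$ and $P(n)^S$, declaring the remaining three ``similar''.
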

\begin{proof}
Begin by checking that $P(n)^W=P(n-i)$.  Using the description of
$P(n)^W$ given in Proposition~\ref{big-from-small}, and the fact that 
the black squares of $P(n)$ are interleaved, we see that the
odd vertical edges in $P(n)^W$  
are those of $P(n)$, except that the rightmost odd edge in each
row has been removed.  (If this was the only odd edge in the row, then
the entire row disappears.)  Therefore $P(n)^W$ can be constructed
by following the construction for $P(n)$, but beginning with 
$c=1,2,\ldots$ instead of $c=0,1,\ldots$. 
This means that in row $r\geq 0$ of $P(n)^W$, odd edges appear at 
positions $U(n,r,1),U(n,r,2),\ldots$ as long as these values continue 
to exceed $r$.  (Similarly in rows $r\leq 0$, using $L$ instead of $U$.)  

Let us now compare this with $P(n-i)$. In row $r\geq 0$ of $P(n-i)$,
odd edges appear in positions 
$U(n-i,r,0),U(n-i,r,1),\ldots$, as long as these values continue to
exceed $r$.  However we showed in
Lemma~\ref{lemma:arithmetic} that  
$U(n-i,r,c-1)=U(n,r,c)$, so the sequence of odd edges in 
row $r$ is the same in $P(n)^W$ and in $P(n-i)$.  The situation is
similar in rows $r<0$ using the equality for $L$.  As we remarked 
above, a pinecone is determined by its odd edges (and the position of
its leftmost edge), so $P(n)^W=P(n-i)$.

\medskip
The other four equivalences are similar.  The only new development
is that instead of being positioned at the origin, the smaller
pinecones are now offset by one or two columns (in all four cases)
and possibly rows (in the case of $P(n)^N$ and $P(n)^S$).
We will look at $P(n)^S$ as an example, and let the reader supply
the details for the remaining three cases.

As noted after Proposition~\ref{big-from-small},  for $r \le 0$, row
$r$ of $P(n)^S$ coincides with row $r-1$ of $P(n)$ (see
Figure~\ref{fig:sub-pinecones} for an example).
 For $r>0$, the leftmost cell of row $r$ of $P(n)^S$ lies two steps
 to the right of the leftmost cell of row $r-1$ of $P(n)$. Moreover,
 the interleaving property implies, by induction on $r$, that the
 last (\emm i.e.,, rightmost) black square of row $r$ of $P(n)^C$ is
 the next-to last black square of row $r-1$ of $P(n)$.  
Thus the odd edges of $P(n)^S$ are located as follows: for rows
$-r$, with $r=1,2,\ldots$, in  
columns $L(n,r,0), L(n,r,1),\ldots$, as long as these numbers
continue to exceed $r$; and for rows $r=0,1,2,\ldots$, in columns
$U(n,r,1),U(n,r,2),\ldots$, as long as these numbers continue to
exceed $r+2$.

Let us now look at a copy of $P(n-\ell)$ positioned with its
origin at $(1,-1)$.  After this translation, the odd vertical
edges in rows $-r$, with $r=1,2,\ldots$ 
are located at abscissas 
$L(n-\ell,r-1,c)+1$, for $c\ge 0$ and as long as these numbers
continue to exceed $r$.  Lemma~\ref{lemma:arithmetic} then implies
that the bottom parts of $P(n)^S$ and of the translate of $P(n-\ell)$
coincide. 
After the translation, the odd vertical
edges of $P(n-\ell)$ lying in rows $r$, with $r=0, 1,2,\ldots$ are
located at abscissas   
$U(n-\ell,r+1,c)+1$, for $c\ge 0$ and as long as these numbers
continue to exceed $r+2$. Lemma~\ref{lemma:arithmetic} then implies
that the top parts of $P(n)^S$ and of the translate of $P(n-\ell)$ coincide.

\smallskip
This completes the analysis for $P(n)^S$; the verifications for
$P(n)^N$, $P(n)^E$ and $P(n)^C$ are similar 
(and even identical, up to symmetry, in the case of $P(n)^N$).
\end{proof}

\begin{figure}[thb]
\begin{center}
\scalebox{0.85}{\input{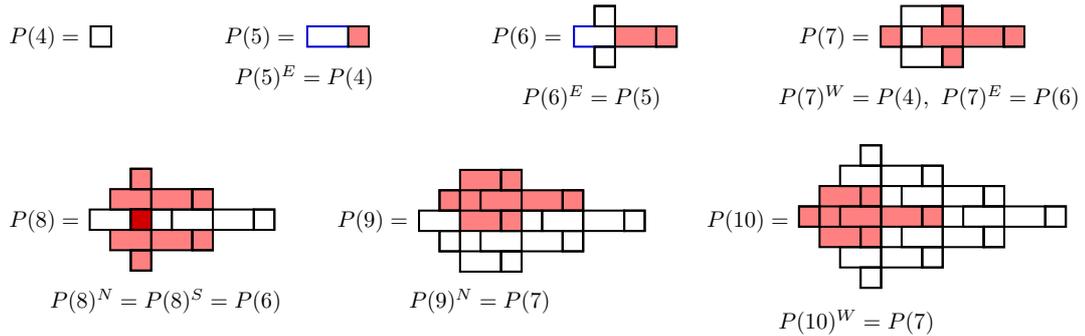}}
\end{center}
\caption{Recursive graphical construction of the pinecones associated with
the Somos-4 sequence.  At each stage, one (or two) of the components
that are superimposed to form the pinecone is highlighted.}
\label{fig:somos4-rec}
\end{figure}

\noindent{\bf Remark: a recursive construction of the pinecones
 $P(n)$.}
The above proposition, combined with
Proposition~\ref{big-from-small},  provides an alternative
way of constructing the sequence of pinecones $P(n)$ associated with a
given set of parameters $(i,j,k,\ell)$.
For $0 \leq n < m$, we put $P(n)$ equal to the empty graph
(which has one perfect matching),
and for $m \leq n < m+j$, we put $P(n)$ equal to the graph
with four vertices and four edges surrounding one square face
(which has 2 perfect matchings). 
Then, for $n\ge m+j$, it suffices to superimpose $P(n-i),
P(n-j), P(n-k)$ and $P(n-\ell)$, and add a $2$-by-$1$ rectangle to the left
of the longest row of $P(n-j)$. More precisely, the four above pinecones
must be positioned in such a way the leftmost cell of $P(n-i)$
(resp.~$P(n-j), P(n-k),P(n-\ell)$) has its South-West corner at
 $(0,0)$ (resp.~$(2,0)$, $(1,1)$, $(1,-1)$), while the
$2$-by-$1$ rectangle has its South-West corner at $(0,0)$. 
(This rectangle is only needed if $P(n-i)$ is empty
or consists of a single black square.
Typically this 2-by-1 rectangle comes for free as part of $P(n-i)$.
Note that we do not claim that this rectangle is a face of the pinecone;
the odd edge joining $(1,0)$ and $(1,1)$
will be present or absent in $P(n)$,
according to whether it is present or absent in $P(n-i)$.)
This gives a graphical, inductive way of constructing $P(n)$. 
This method is illustrated in 
Figure~\ref{fig:somos4-rec} by the case of the Somos-4 sequence, for which 
$$
a(n)a(n-4)=a(n-3)a(n-1)+a(n-2)^2.
$$
That is, $(i,j,k,\ell)=(3,1,2,2)$ and $m=4$.

\medskip

We can  now state our combinatorial interpretation of the Gale-Robinson numbers.
\begin{Theorem}
\label{thm:GR-pinecones}
Let $P(n)\equiv P(n;i,j,k,\ell)$ be the sequence of pinecones
associated with the parameters $(i,j,k,\ell)$. Let $a(n)$ denote the
number of perfect matchings of $P(n)$. Then $a(n)=1$ for $n<m$, and
for $n \ge m$, the sequence $a(n)$ satisfies
the following Gale-Robinson recurrence:
$$
a(n)a(n-m) = a(n-i)a(n-j)+a(n-k)a(n-\ell).
$$
\end{Theorem}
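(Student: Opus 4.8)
The plan is to read the theorem off directly from the two structural results already in hand: the condensation identity for closed pinecones (Theorem~\ref{thm:condensation}) supplies the shape of the recurrence, while the identification of the five sub-pinecones (Proposition~\ref{lemma:GRpine}) pins down which term of the Gale-Robinson recurrence each factor corresponds to. No induction is really needed, since the statement merely asserts that the matching numbers $a(n)=m(P(n))$ obey the stated initial conditions and recurrence.

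First I would settle the initial conditions. It was observed just after Formula~\eqref{UL-def} that $P(n)$ is empty if and only if $n<m$; since by convention the empty graph has matching number $1$, this gives $a(n)=m(P(n))=1$ for every $0\le n<m$, which is the first assertion.

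Next, fix $n\ge m$. Then $P(n)$ is non-empty and closed, so Theorem~\ref{thm:condensation} applies and yields
$$
m(P(n))\,m(P(n)^C)=m(P(n)^W)\,m(P(n)^E)+m(P(n)^N)\,m(P(n)^S).
$$
By Proposition~\ref{lemma:GRpine} the five closed sub-pinecones are, up to translation, $P(n)^W=P(n-i)$, $P(n)^E=P(n-j)$, $P(n)^N=P(n-k)$, $P(n)^S=P(n-\ell)$, and $P(n)^C=P(n-m)$. A translation is only a relabelling of vertices and hence preserves the matching number, so $m(P(n)^W)=m(P(n-i))$ and likewise for the other four. Substituting and abbreviating $a(n')=m(P(n'))$ turns the displayed identity into
$$
a(n)\,a(n-m)=a(n-i)\,a(n-j)+a(n-k)\,a(n-\ell),
$$
which is exactly the Gale-Robinson recurrence.

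The genuine content has thus been front-loaded into Theorem~\ref{thm:condensation} and Proposition~\ref{lemma:GRpine}, and the one point I expect to require care is the low end of the range, where $P(n)$ is a single square (for $m\le n<m+j$) and several of its sub-pinecones degenerate to the empty graph. Here the convention that the empty graph has matching number $1$ is exactly what keeps the bookkeeping consistent: at $n=m$, for instance, the five sub-pinecones are $P(j),P(i),P(\ell),P(k),P(0)$, all empty, so condensation returns $a(m)\cdot 1=1\cdot 1+1\cdot 1$, correctly recovering the two matchings of a single square. Confirming that the condensation theorem is legitimately applicable to these degenerate closed pinecones is the only subtle step; everything else is immediate substitution.
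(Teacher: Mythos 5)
Your proposal matches the paper's own proof essentially line for line: both establish the initial conditions from the observation that $P(n)$ is empty precisely when $n<m$ (with the convention that the empty graph has one matching), then combine Theorem~\ref{thm:condensation} with Proposition~\ref{lemma:GRpine} to turn the condensation identity into the Gale-Robinson recurrence. Your extra check of the degenerate case $n=m$, where all five sub-pinecones are empty, is a sensible precaution the paper leaves implicit, but it changes nothing about the argument.
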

\begin{proof}
We have already observed that the pinecone $P(n)$ is 
empty for $n<m$.  Hence the initial conditions apply correctly.
Now for $n\ge m$, Theorem \ref{thm:condensation} states that
the matching matching of 
$P(n)$ is related to the matching numbers of its closed
sub-pinecones by
\begin{eqnarray*}
 m(P(n)) m(P(n)^C) = m(P(n)^W) m(P(n)^E) + m(P(n)^N) m(P(n)^S).
\end{eqnarray*}
Proposition~\ref{lemma:GRpine} then implies that 
$m(P(n)^C) = m(P(n-m))$, etc.  Therefore,
\begin{eqnarray*}
m(P(n))m(P(n-m)) = m(P(n-i))m(P(n-j)) +x
m(P(n-k))m(P(n-\ell)),
\end{eqnarray*}
which is the recurrence relation satisfied by $a(n)$.
\end{proof}

Before we study a specific example, let us state an obvious corollary
of Theorem~\ref{thm:GR-pinecones}.
\begin{coro} \label{GR-coro}
Let $i,j,k, \ell$ be positive integers such that $i+j=k+\ell=m$. 
The recurrence relation
$$
a(n)a(n-m) = a(n-i)a(n-j) + a(n-k)a(n-\ell),
$$
with initial conditions $a(n)=1$ for $n<m$, defines a sequence of
positive integers
\end{coro}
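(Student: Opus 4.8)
The plan is to read off the corollary from Theorem~\ref{thm:GR-pinecones}, the only genuine content being to verify that the recurrence never calls for division by zero and that the numbers it produces are strictly positive. Write $\tilde a(n)$ for the number of perfect matchings of the pinecone $P(n;i,j,k,\ell)$. By Theorem~\ref{thm:GR-pinecones} the sequence $\tilde a(n)$ satisfies $\tilde a(n)=1$ for $n<m$ and obeys the Gale--Robinson recurrence for $n\ge m$; being a count of matchings, each $\tilde a(n)$ is a non-negative integer.

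First I would establish that every $\tilde a(n)$ is in fact strictly positive. For $n<m$ the pinecone $P(n)$ is empty, and the empty graph has exactly one perfect matching, so $\tilde a(n)=1$. For $n\ge m$ the pinecone $P(n)$ is a nonempty closed pinecone, and as recorded in Section~\ref{sec:pinecones-def} every nonempty closed pinecone admits at least one perfect matching (for instance the all-horizontal matching of Figure~\ref{fig:special-matchings}); hence $\tilde a(n)\ge 1$ in every case.

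Next I would show, by induction on $n$, that the recurrence together with its initial conditions defines a unique rational sequence $a(n)$, and that this sequence coincides with $\tilde a(n)$. For $n<m$ one has $a(n)=1=\tilde a(n)$. For $n\ge m$, the positivity just established gives $a(n-m)=\tilde a(n-m)>0$, so the division in
$$
a(n)=\frac{a(n-i)a(n-j)+a(n-k)a(n-\ell)}{a(n-m)}
$$
is legitimate, $a(n)$ is well defined as a rational number, and since $\tilde a$ satisfies the same recurrence with the same (already identified) earlier terms, we obtain $a(n)=\tilde a(n)$. Thus $a(n)=\tilde a(n)$ for all $n$, and each such value is a positive integer.

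The only point requiring any care, and hence the main obstacle—modest as it is—is the positivity claim, since it is precisely what guarantees $a(n-m)\neq 0$ at every stage and therefore that the recurrence genuinely defines a sequence rather than breaking down through division by zero. Once positivity is in hand, both integrality and positivity of the $a(n)$ follow immediately from their identification with the matching numbers $\tilde a(n)$.
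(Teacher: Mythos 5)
Your proof is correct and takes essentially the same route as the paper, which states this result as an immediate (``obvious'') consequence of Theorem~\ref{thm:GR-pinecones} by identifying $a(n)$ with the matching numbers of the pinecones $P(n;i,j,k,\ell)$. The one thing you add explicitly---the inductive verification that $a(n-m)\neq 0$, using the nonemptiness of $P(n)$ for $n\ge m$ and the all-horizontal matching of a closed pinecone, so that the recurrence never divides by zero---is exactly the point the paper leaves implicit, not a different argument.
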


\medskip
\noindent{\bf Example.}
We give a specific example in the case where $(i,j,k,\ell)$ = $(6,2,5,3)$ 
and $n=25$.
We also show how to use the VAXmaple software package
(available at {\tt http://jamespropp.org/vaxmaple.c})
to compute the number of perfect matchings in the constructed pinecone,
which can be seen to be the $25$th term in the appropriate Gale-Robinson
sequence.

Considering first the upper portion of $P(n)$, we fix $r$ and
then consider the first few values of $U(n,r,c)$ as $c=0,1,2,3,\ldots$:
\begin{eqnarray*}
r = 0:&&  \{17, 11, 5, -1, \ldots\} \\ 
r = 1:&&  \{14, 8, 2, -4, \ldots\} \\
r = 2:&&  \{9, 3, -3, -9, \ldots\} \\
r = 3:&&  \{6, 0 , -6 -12, \ldots\} \\
r = 4:&&  \{1, -5, -11, -17, \ldots\}
\end{eqnarray*}
Since the $c=0$ value for $r=4$ is already less than $r$, there
are only three non-empty rows above the middle (longest) row in
this pinecone.
For the lower portion of the diagram, we obtain the
following values of $L(n,r,c)$:
\begin{eqnarray*}
r = 0:&& \{17, 11, 5, -1, \ldots\} \\ 
r = 1:&& \{16, 10, 4, -4, \ldots\} \\
r = 2:&& \{13, 7, 1, -5, \ldots\} \\
r = 3:&& \{12, 6, 0 , -6, \ldots\} \\
r = 4:&& \{9, 3, -3, -9, \ldots\} \\
r = 5:&& \{8, 2, -4, -10, \ldots\} \\
r = 6:&& \{5, -1, -7, -13, \ldots\}
\end{eqnarray*}
Completing the construction, we arrive at the graph of
Figure~\ref{big-example}.

\begin{figure}[htb]
\begin{center}
\input{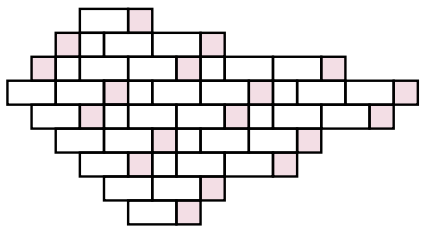}
\caption{The pinecone $P(25;6,2,5,3 )$.}
\label{big-example}
\end{center}
\end{figure}

It is easy to translate this into the format required by the
computer program VAXmaple, written by Greg Kuperberg,
Jim Propp and David Wilson to count perfect matchings of
finite subgraphs of the infinite square grid. In this format,
each vertex present in the graph is represented by a letter.
The choice of letter indicates whether any edges are omitted
when connecting the vertex to its nearest neighbours --- each
vertex having up to four of these. An {\tt X} indicates that no edges
are omitted; an {\tt A} indicates that the edge leading upward from
the vertex is omitted; a {\tt V} indicates the omission of the downward
edge.
(For a more detailed explanation of the software, see 
{\tt http://jamespropp.org/vaxmaple.doc}.) 
The encoding of the pinecone of Figure~\ref{big-example} 
is given in Figure~\ref{vax-example}.

\begin{figure}[htb]
\begin{center}
\begin{verbatim}
                                 XVXX
                                XXAVXVXX
                               XXXVAVAXXVXVXX
                              XVXVAXAVXVAXAVXVXX
                              XAVAXXVAVAXXVAVAXX
                               XAVXVAXAVXVAXAXX
                                XAVAXXVAVAXX
                                 XAVXVAXVXX
                                  XAVAXX
                                   XAXX
\end{verbatim}
\caption{The pinecone $P(25;6,2,5,3)$ as a {\tt VAX} file.}
\label{vax-example}
\end{center}
\end{figure}

Counting the perfect matchings in this pinecone by running the above
input through the VAXmaple program and then through Maple
produces 167,741, as it should, since the $25$th term of the
Gale-Robinson sequence constructed from $(6,2,5,3)$ is 167,741.

\section{The Gale-Robinson bivariate polynomials}\label{sec:bivariate}
As stated in Corollary~\ref{GR-coro}, Theorem~\ref{thm:GR-pinecones}
implies that the three-term Gale-Robinson sequences consist of
integers. In this section, we refine this result as follows.
\begin{Theorem}
\label{thm:GR-refined}
Let $i, j, k, \ell$ and $m$ be positive integers such that
$i+j=k+\ell=m$.
Let $w$ and $z$ be two indeterminates, and define a sequence
$p(n)\equiv p(n;w,z)$ by $p(n)=1$
for $n<m$ and for $n \ge m$,
$$
p(n)p(n-m) =w\, p(n-i)p(n-j)+z\, p(n-k)p(n-\ell).
$$
Then $p(n)$ is a polynomial in $w$ and $z$ with nonnegative integer
coefficients. 
\end{Theorem}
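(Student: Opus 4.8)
The plan is to lift the entire argument of Sections~\ref{sec:pinecones} and~\ref{sec:pinecones-GR} from matching \emph{numbers} to matching \emph{polynomials}, recording in each perfect matching two statistics: the number of vertical edges, and the number of edges belonging to a distinguished family of horizontal edges, which I will call \emph{special}. Concretely, I would assign the weight $v$ to every vertical edge, the weight $u$ to every special horizontal edge, and the weight $1$ to every remaining edge, and I would study the resulting weighted matching polynomial $M(P(n))$. The goal is to establish the single identity $M(P(n)) = p(n;u^{2},v^{2})$, from which the theorem will drop out.

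The first step is the matching-polynomial refinement of the condensation theorem, namely the announced Theorem~\ref{thm:condensation-complete}. Applying Corollary~\ref{coro:G} to the completion $G$ of a closed pinecone $P$ inside its diamond $A$ already produces a matching-polynomial identity whose two correction factors are $ns$ and $ew$, the products of the two extreme horizontal edges and of the two extreme vertical edges of $A$. I would then refine Proposition~\ref{core-recursive}: passing from $G$ (and from each of $G_N,\dots,G_C$) to its core multiplies the matching polynomial only by the monomial formed from the \emph{forced} edges of the shelling. If the weighting is arranged so that these forced filler edges all carry weight $1$, while the two extreme vertical edges count as vertical (weight $v$) and the two extreme horizontal edges are special (weight $u$), the correction factors specialize to $ew = v^{2}$ and $ns = u^{2}$. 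Since the five sub-pinecones are the very same graphs as before, Proposition~\ref{lemma:GRpine} applies unchanged, and we obtain
$$
M(P(n))\,M(P(n-m)) = u^{2}\,M(P(n-i))M(P(n-j)) + v^{2}\,M(P(n-k))M(P(n-\ell)).
$$

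Next I would run the induction and extract the polynomiality. For $n<m$ the pinecone $P(n)$ is empty, its only matching is the empty one, and $M(P(n)) = 1 = p(n;u^{2},v^{2})$; for $n\ge m$ the displayed recurrence is exactly the defining recurrence of $p(n;w,z)$ after the substitution $w=u^{2}$, $z=v^{2}$, so by induction $M(P(n)) = p(n;u^{2},v^{2})$ as elements of $\qs(u,v)$. Now $M(P(n))$ is by construction an honest polynomial in $u,v$ with nonnegative integer coefficients. Being equal to an element of the subfield $\qs(u^{2},v^{2})$, it is invariant under $u\mapsto -u$ and under $v\mapsto -v$, hence involves only even powers of $u$ and of $v$; write $M(P(n)) = Q(u^{2},v^{2})$ with $Q\in\ns[w,z]$. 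Because the monomial substitution $(w,z)\mapsto(u^{2},v^{2})$ is injective, $Q(w,z) = p(n;w,z)$, and the coefficients of $Q$ are precisely those of $M(P(n))$. This shows that $p(n;w,z)$ is a polynomial in $w$ and $z$ with nonnegative integer coefficients, which is the assertion of the theorem. (Note that evenness of the two statistics is \emph{not} assumed but comes out of the argument.)

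The main obstacle is the correct definition of the special horizontal edges, together with the verification that it is compatible with all of the reductions above. On the one hand, the weighted polynomials of $P(n)^W,\dots,P(n)^C$ must agree with those of $P(n-i),\dots,P(n-m)$, and since Proposition~\ref{lemma:GRpine} identifies these sub-pinecones as translates by the vectors $(0,0),(2,0),(1,1),(1,-1)$ --- all of even coordinate-sum, hence preserving the black/white coloring --- the special family cannot be read off from raw position alone but must be defined \emph{structurally} (for instance through the coloring and the black squares of each pinecone), so that an isomorphism of pinecones automatically carries special edges to special edges. On the other hand this same definition must force the two extreme horizontal edges of every diamond that occurs to be special, while leaving every forced filler edge non-special, and it must remain consistent when an edge is viewed inside $P(n)$ versus inside one of its sub-pinecones. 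Reconciling these local and global requirements --- and checking, as in the proof of Proposition~\ref{core-recursive}, that the refined core reduction never deletes a special edge because the rightmost edges of a core are neither forced nor forbidden --- is the delicate combinatorial heart of the proof; once the special edges are pinned down, everything else is the bookkeeping indicated above.
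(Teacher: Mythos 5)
Your overall architecture matches the paper's (weight vertical edges by $v$ and a structurally defined family of special horizontal edges by $u$, prove a weighted condensation with coefficients $u^2$ and $v^2$, induct, then extract polynomiality and nonnegativity of $p(n;w,z)$ from evenness in $u$ and $v$ --- this last extraction step is fine, and in fact spelled out slightly more carefully than in the paper). But there is a genuine gap at the decisive step: your identification of the condensation coefficients. When Corollary~\ref{coro:G} is transported from the completion $G$ to the cores, the factor $ns$ does \emph{not} survive: the computation in the proof of Theorem~\ref{thm:condensation-complete} shows $ns\,H(A_W)H(A_E)/\bigl(H(A)H(A_C)\bigr)=1$, i.e.\ the two extreme horizontal edges of the diamond cancel against filler edges --- they are generally not edges of the pinecone $P$ at all, and whatever weight you assign them washes out. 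The horizontal monomial actually multiplying $M(P^W)M(P^E)$ is $\prod_{a\in E_1}a$, the product of \emph{all} left edges of $P$ not belonging to $P^W$ (one for each row of $P$ with a single black square, typically far more than two), and the second term carries, besides $ew$, the further horizontal monomials coming from $E_2^1$, $E_2^2$ and $H^-(P^C\setminus P^N)\,H^+(P^C\setminus P^S)$. So your plan --- make the two extreme horizontal edges of every diamond special and the forced filler edges ordinary --- is aimed at edges that cancel identically and cannot produce the coefficient $u^2$.

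What is actually needed, and what you defer to ``the delicate combinatorial heart'' with the wrong guiding constraints, is threefold: (i) the interleaving property of the Gale--Robinson pinecones (the interleaving lemma for $U$ and $L$), which is what makes the factors $H^{\pm}(\cdot)$ trivial and which your proposal never invokes; (ii) the intrinsic ordinary/special dichotomy of Definition~\ref{def:inactive}, phrased in terms of the nearest black square weakly to the right in rows $r-1$ and $r$ (not in terms of diamonds), under which all edges of $E_2^1$ and $E_2^2$ are ordinary and, by Lemma~\ref{lemma:left-interleaved}, exactly two edges of $E_1$ --- the highest and lowest left edges --- are special, yielding the coefficients $aa'=u^2$ and $ew=v^2$ of Corollary~\ref{thm:condensation-interleaved}; and (iii) the stability Lemma~\ref{lem:stable}, proved via the characterization of the five sub-pinecones as largest closed sub-pinecones with prescribed extreme cells, which guarantees the weighting is consistent along the induction. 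You correctly anticipate (iii) in spirit, but without (i), and with an incorrect target for (ii), the weighted condensation you assert with coefficients exactly $u^2$ and $v^2$ does not follow from the steps you describe.
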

The proof goes as follows: we have already seen that $p(n;1,1)$ counts
perfect matchings of the pinecone $P\equiv P(n;i,j,k,\ell)$ constructed in
Section~\ref{sec:pinecones-GR}. We will prove that $p(n;u^2,v^2)$ counts these
matchings according to two parameters.
More precisely, we begin by giving in
Section~\ref{sec:condensation-refined} a
condensation theorem that computes inductively the
matching polynomial (rather than the matching number) of closed
pinecones.  We observe that this theorem takes a simpler form when
applied to \emm interleaved, pinecones (a class of pinecones that
contains all Gale-Robinson pinecones).
In Section~\ref{sec:active}, we define the \emm special, horizontal edges of a
pinecone. We then define the \emm partial, matching
polynomial of a pinecone $P$ as the matching polynomial $M(P)$ in which 
the weights of non-special edges are set to 1.
We specialize the condensation theorem of
Section~\ref{sec:condensation-refined} to a
condensation theorem for the partial matching polynomial of
interleaved pinecones. Its application to the Gale-Robinson
pinecones $P(n;i,j,k,\ell)$ implies that the polynomial $q(n)\equiv q(n;u,v)$ 
that counts perfect matchings of $P(n)$ according to the number of
vertical edges (the exponent of $v$) and horizontal special edges 
(the exponent of $u$) 
satisfies $q(n)=1$ for $n<m$ and 
$$
q(n)q(n-m) =u^2 q(n-i)q(n-j)+v^2 q(n-k)q(n-\ell)
$$
for $n \ge m$. This shows that $q(n;u,v)=p(n;u^2,v^2)$ 
and implies Theorem~\ref{thm:GR-refined}.

\subsection{A condensation theorem for the matching polynomial}
\label{sec:condensation-refined}
Let us go back to the condensation theorem for closed pinecones
(Theorem~\ref{thm:condensation}).  
We now state and prove a stronger result 
dealing with the matching polynomial rather than the matching number.
Let $P$ be a closed pinecone and 
$A$ the smallest diamond graph that contains it, 
with $G$ defined as in the beginning of
Section~\ref{section:condensation-pinecones}
and with $n$, $s$, $e$, $w$ as in 
Theorem~\ref{thm:kuo} and Figure~\ref{fig:aztec}.
Since $P$ is the core of $G$, the matching polynomial $M(G)$ equals
$M(P) M(G\setminus P)$. Similar
results hold for the sub-pinecones $P^C$, $P^W$, $P^E$, $P^N$ and $P^S$.
Corollary~\ref{coro:G} gives:
\begin{multline}
\label{condensation-pol}
M(P) M(G\setminus P)M(P^C) M(G_C\setminus P^C)
=ns M(P^W) M(G_W\setminus P^W)M(P^E) M(G_E\setminus P^E)\\
+ ewM(P^N) M(G_N\setminus P^N)M(P^S) M(G_S\setminus P^S).
\end{multline}
Since $P$ is the core of $G$, the graph $G\setminus P$ has a unique
perfect matching, which is formed of horizontal edges only. Hence
$M(G\setminus P)$ is a monomial. The same
holds for the other graph differences occurring in~\eqref{condensation-pol}. 
We can thus rewrite this identity as
$$
M(P)M(P^C)=
\alpha M(P^W)M(P^E) + \beta M(P^N)M(P^S)
$$
for some \emm Laurent, monomials $\alpha$ and $\beta$ (indeed, negative
exponents may arise from the division by $M(G\setminus P) M(G_C\setminus P^C)$).
 Our objective in this subsection is to prove that these monomials
 only involve nonnegative 
exponents (so that they are ordinary monomials), and to describe them
in a more concise way. 

We introduce the following definition, illustrated in
Figure~\ref{fig:left}.
\begin{Definition}
Let $P$ be a closed pinecone. 
A horizontal edge is a \emm left, edge 
if it is the leftmost horizontal edge 
in the horizontal segment of $P$ that contains it.  

A horizontal edge with leftmost vertex $(i,j)$ is \emm even,
if $i+j$ is even, \emm odd, otherwise.
\end{Definition}

\begin{figure}[htb]
\begin{center}
\input{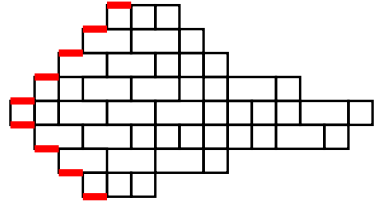}
\caption{The left edges of a closed pinecone.} 
\label{fig:left}
\end{center}
\end{figure}

For a (standard) pinecone $P$, we denote by $P_{\ge}$ (resp.~$P_>$)
the pinecone formed of rows $0,1,2,\ldots$ of $P$ (resp.~rows $1,2,\ldots$).
We use similar notations for the bottom part of $P$. 
These definitions are extended to
transplanted pinecones in a natural way: for instance, 
$(P^N)_\ge$, which we simply denote $P^N_\ge$,
consists of rows $1,2,\ldots$ of $P$. 

\begin{figure}[htb]
\begin{center}
\scalebox{0.85}{\input{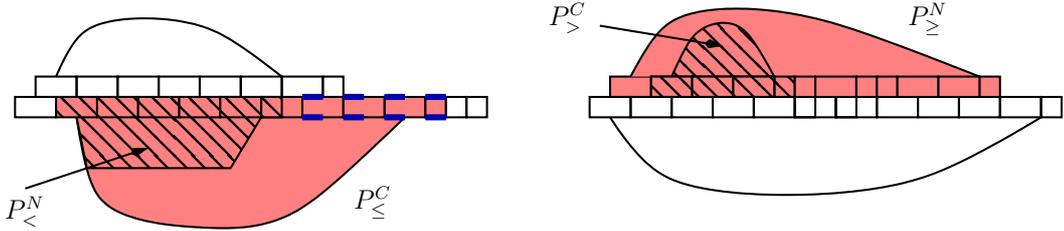}}
\caption{The inclusion properties $P^N_< \subset P^C_{\le}$ and
$P^C_{>} \subset P^N_{\ge}$ (the small subpinecones are dashed). 
The leftmost figure also shows some
edges of the horizontal matching of $P^C_\le \setminus P^N_<$.}
\label{fig:inclusion}
\end{center}
\end{figure}

 Observe that, for any pinecone $P$,
 \beq
\label{inclusions-N}
P^N_< \subset P^C_{\le} \quad \hbox{ while }\quad 
 P^C_{>} \subset P^N_{\ge} .
\eeq
Both properties are illustrated in Figure~\ref{fig:inclusion}.
Consequently, the graph difference 
$P^C\setminus P^N$ is formed of edges  that lie in
$P_\le^C$, and $P^C\setminus P^N\subset P^C_{\le}\setminus P^N_<$.
Let us describe more precisely the horizontal edges of these two graph
differences. For $j \le 1$, if there are any horizontal  edges of 
$P^C_{\le}\setminus P^N_<$ lying at ordinate $j$, then the number
 of them is odd, say $2k_j+1$, and these edges are the $2k_j+1$ rightmost
 horizontal edges of $P^C$ found at ordinate $j$
 (Figure~\ref{fig:inclusion}, left). 
If $j\le 0$, all these edges belong to
 $P^C\setminus P^N$. However, for $j=1$, only a subset of these edges,
 of even cardinality, belong to $P^C\setminus P^N$. (In the example of
 Figure~\ref{fig:inclusion}, the two leftmost thick edges shown at ordinate
 1 do not belong to $P^C\setminus P^N$.)
The   graph $P^C_{\le}\setminus P^N_<$
thus has a unique horizontal 
 matching, which has $k_j+1$ edges at ordinate $j\le 1$. 
 We denote by 
$H^-(P^C\setminus P^N)$ the product of the edges of
 this matching having ordinate $\le 0$. 
The fact that the horizontal edges of $P^C\setminus P^N$ found at
ordinate $j$ coincide with those of $P^C_{\le}\setminus P^N_<$ allows
us to use the notation $H^-(P^C\setminus P^N)$ rather than something
like
$H^-(P^C_\le \setminus P^N_<)$ which would have been heavier.

Symmetrically,
\beq\label{inclusions-S}
P^S_> \subset P^C_{\ge} \quad \hbox{ while }\quad 
 P^C_{<} \subset P^S_{\le} ,
\eeq
 so that the graph $P^C\setminus P^S$
 lies in $P_\ge ^C$. We denote by 
 $H^+(P^C\setminus P^S)$ the product of the edge-weights of the horizontal
 matching of $P^C_\ge \setminus P^S_>$  lying at a positive ordinate.

\begin{figure}[htb]
\begin{center}
\hskip -10mm
\scalebox{1}{\input{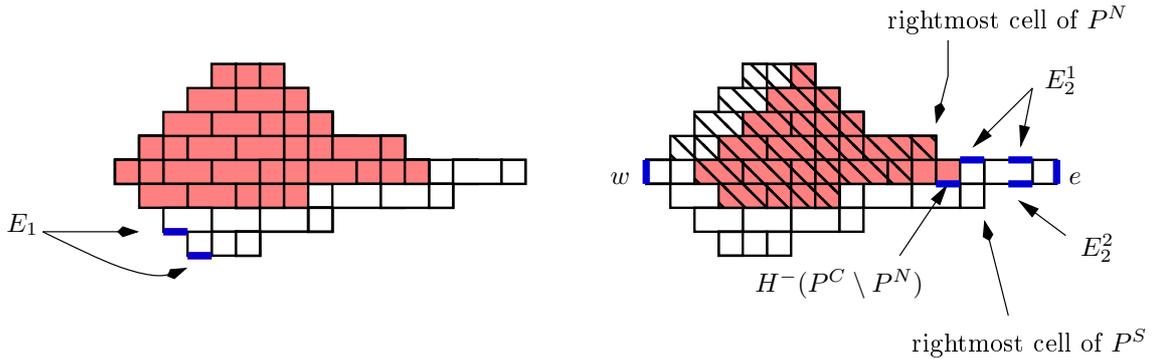}}
\caption{Left: The edges occurring in the first term of the
refined condensation theorem, with the pinecone $P^W$ shown. 
Right: The edges occurring in the second term.  
Here $H^+(P^C\setminus P^S)=1$. The two distinguished
pinecones are $P^C$ and $P^N$ (dashed).}
\label{fig:refined}
\end{center}
\end{figure}

We can now state a condensation theorem for the matching polynomial of
closed pinecones. See Figure~\ref{fig:refined} for an illustration. 

\begin{Theorem}[{\bf The matching polynomial of closed pinecones}]
                  \label{thm:condensation-complete}
The matching polynomial of a closed pinecone $P$ is related to the matching
polynomial of its 
 sub-pinecones by
\begin{multline*}
M(P) M\left(P^C\right)= \left(\prod_{a\in E_1}a\right)
M\left(P^W\right) M\left(P^E\right) \\
+\left(\prod_{a\in E_2}a\right) 
H^-(P^C\setminus P^N)H^+(P^C\setminus P^S)
M\left(P^N\right) M\left(P^S\right),
\end{multline*}
where 
\begin{itemize}
\item $E_1$ is the set of left edges of $P$ not belonging to $P^W$,
\item $E_2$ is the union of three edge-sets $E_2^{i}$, for $0\le i \le 2$:
  \begin{itemize}
  \item $E_2^{0}=\{e,w\}$ contains the eastmost and westmost
    vertical edges of $P$,
\item $E_2^{1}$ contains the even edges at ordinate $1$ not belonging to
  $P^N$,
\item $E_2^{2}$ contains the odd edges at ordinate $0$ 
not belonging to $P^S$.
  \end{itemize}
\end{itemize}
\end{Theorem}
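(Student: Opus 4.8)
The plan is to extract the two coefficients from the Laurent-monomial identity that the preceding discussion has already set up, and then to identify them by a direct row-by-row comparison of forced matchings. Starting from~\eqref{condensation-pol}, I would invoke Proposition~\ref{core-recursive} (that $P$ is the core of $G$) together with its analogues for the five sub-pinecones: each graph difference $G\setminus P$, $G_C\setminus P^C$, $G_W\setminus P^W$, $G_E\setminus P^E$, $G_N\setminus P^N$, $G_S\setminus P^S$ has a unique perfect matching, formed entirely of horizontal edges, so every $M(G_X\setminus P^X)$ is the monomial equal to the product of the horizontal edges of that forced matching. Substituting $M(G)=M(P)M(G\setminus P)$ and its analogues into~\eqref{condensation-pol} and dividing through by the monomial $M(G\setminus P)M(G_C\setminus P^C)$ yields
$$
M(P)M(P^C)=\alpha\,M(P^W)M(P^E)+\beta\,M(P^N)M(P^S),
$$
with
$$
\alpha=\frac{ns\,M(G_W\setminus P^W)M(G_E\setminus P^E)}{M(G\setminus P)M(G_C\setminus P^C)},
\qquad
\beta=\frac{ew\,M(G_N\setminus P^N)M(G_S\setminus P^S)}{M(G\setminus P)M(G_C\setminus P^C)}.
$$
The entire content of the theorem is then to prove that after cancellation $\alpha=\prod_{a\in E_1}a$ and $\beta=\bigl(\prod_{a\in E_2}a\bigr)H^-(P^C\setminus P^N)H^+(P^C\setminus P^S)$. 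Since these target expressions are honest squarefree monomials, identifying them simultaneously establishes that $\alpha,\beta$ have no negative exponents, which is exactly the nonnegativity claimed above.

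For $\alpha$ I would compare the four forced horizontal matchings ordinate by ordinate. The diamonds $A_W,A_E,A_C$ sit inside $A$ in such a way that, using the descriptions of $P^W,P^E,P^C$ from Proposition~\ref{big-from-small} (in particular that $P^W$ is the largest closed sub-pinecone with rightmost cell $r'_0$, and that $P^C$ is rooted on $(\ell'_0,r'_0)$), at each ordinate the horizontal edges that $G_W\setminus P^W$ and $G_E\setminus P^E$ contribute to the numerator coincide with those that $G\setminus P$ and $G_C\setminus P^C$ contribute to the denominator, and so cancel; the boundary edges $n,s$ of $A$ cancel likewise against the extreme strips of $G\setminus P$. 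The edges left uncancelled are then precisely the leftmost horizontal edge of each row of $P$ that does not survive in $P^W$, which is exactly the set $E_1$, giving $\alpha=\prod_{a\in E_1}a$.

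For $\beta$ the comparison is governed by the inclusions~\eqref{inclusions-N} and~\eqref{inclusions-S}, which show that $P^C\setminus P^N$ lies in $P^C_{\le}$ and $P^C\setminus P^S$ in $P^C_{\ge}$. Away from the two central rows the horizontal matchings of $G_N\setminus P^N$ and $G_S\setminus P^S$ cancel most of those of $G\setminus P$ and $G_C\setminus P^C$, the surviving horizontal edges being exactly the forced matching of $P^C_{\le}\setminus P^N_<$ at ordinate $\le 0$ and that of $P^C_{\ge}\setminus P^S_>$ at positive ordinate, namely $H^-(P^C\setminus P^N)$ and $H^+(P^C\setminus P^S)$. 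The vertical edges $e,w$ of $A$ equal the eastmost and westmost vertical edges of $P$, and being vertical they cannot cancel against the horizontal ratio, so they survive to form $E_2^0$. The delicate point is the residue at the central rows: at ordinate $1$ only an even-cardinality subset of the candidate horizontal edges lies in $P^C\setminus P^N$, and symmetrically at ordinate $0$ for $P^C\setminus P^S$, and I would match this residue against the even edges at ordinate $1$ not in $P^N$ (the set $E_2^1$) and the odd edges at ordinate $0$ not in $P^S$ (the set $E_2^2$). Assembling $E_2^0\cup E_2^1\cup E_2^2$ with $H^-$ and $H^+$ then gives the stated $\beta$.

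I expect the main obstacle to be precisely this central-row bookkeeping: everywhere else the cancellation is clean and essentially combinatorial, but near ordinates $0$ and $1$ the forced-matching structure is irregular — this is the ``even cardinality'' caveat already flagged in the definition of $H^-(P^C\setminus P^N)$ — and one must verify by a careful parity argument that the leftover horizontal edges assemble into exactly $E_2^1$ and $E_2^2$, with no spurious factors and, crucially, no negative exponents. I would organize this by fixing an orientation of each row, counting the horizontal edges of each of the six graph differences at every ordinate, and checking that the resulting exponent of each edge is $0$ or $1$; the interleaving and monotonicity of the black squares from Section~\ref{sec:pinecones-def} guarantee that the strips line up as the cancellation requires.
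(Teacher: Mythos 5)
Your proposal is correct and follows essentially the same route as the paper's own proof: starting from~\eqref{condensation-pol}, using the uniqueness of the horizontal matchings of the graph differences to reduce everything to identifying the two Laurent monomials $\alpha$ and $\beta$, recognizing $\alpha=\prod_{a\in E_1}a$ from the left edges of $P$ and $P^W$, and isolating $e$, $w$, $H^-$, $H^+$ and the residue at ordinates $0$ and $1$ for $\beta$. The ``central-row bookkeeping'' you defer is exactly what the paper carries out explicitly, by splitting each monomial into an $A$-ratio and a $P$-ratio (evaluating $ns\,H(A_W)H(A_E)/(H(A)H(A_C))=1$ and the identity~\eqref{HA}) and then factoring $H(P)$, $H(P^C)$, $H(P^N)$, $H(P^S)$ row by row relative to ordinates $0$ and $1$, which is precisely the parity argument you anticipate.
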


\noindent
\begin{proof}
In this proof, we adopt the following notation: for each edge set $E$,
we also denote by $E$ the product of the edges of the set. For a graph
$G$ having a unique horizontal (perfect) matching, we denote 
this matching by $H(G)$.

Let us return to~\eqref{condensation-pol}. Recall that $G\setminus P$
has a unique matching, consisting of horizontal edges only.
Denoting by $A$ the
smallest diamond graph containing $P$, we observe that $M(G\setminus P)=
H(A)/H(P)$ (see Figure~\ref{fig:completion-pinecone}). 
Similar identities hold for the other pinecones occurring
in~\eqref{condensation-pol}.  For instance, $M(G_W\setminus P^W)=
H(A_W)/H(P^W)$. This allows us to rewrite~\eqref{condensation-pol} as 
\begin{multline*}
M(P)M(P^C)= ns\ \frac{H(A_W)H(A_E)}{H(A)H(A_C)} \ 
\frac{H(P)H(P^C)}{H(P^W)H(P^E)}\ M(P^W)M(P^E)\\
+
ew\ \frac{H(A_N)H(A_S)}{H(A)H(A_C)}\ \frac{H(P)H(P^C)}{H(P^N)H(P^S)}
\ M(P^N)M(P^S).
\end{multline*}
Let us begin with the two factors involving $A$ and its
subgraphs. It is easy to see, with the help of
Figure~\ref{fig:sub-diamonds}, that 
$$
 ns\ \frac{H(A_W)H(A_E)}{H(A)H(A_C)}=1.
$$
The second factor involving $A$, namely $ew{H(A_N)H(A_S)}/{(H(A)H(A_C))}$,
is a multiple of $e$ and $w$ (all the other edges are horizontal) and
thus cannot be equal to 1.   
Denoting by $L_1^{(e)}$  
the graph formed by the even horizontal edges lying
at ordinate 1, and introducing similar notations 
$L_1^{(o)}$, $L_0^{(e)}$ and $L_0^{(o)}$, one finds
\beq\label{HA}
 \frac{H(A_N)H(A_S)}{H(A)H(A_C)}= \frac{L_1^{(e)} L_0^{(o)}}
{L_1^{(o)} L_0^{(e)}}.
\eeq
It remains to describe the two factors that involve $P$ and its subgraphs. 
For the first one, we note that $H(P)/H(P^E)$ is simply the product of the
left edges of $P$. Similarly, as $P^C= (P^W)^E$, the ratio
$H(P^W)/H(P^C)$ is the product of the left edges of $P^W$. This gives
the following expression for the first factor:
$$
\frac{H(P)H(P^C)}{H(P^W)H(P^E)}= \prod_{a\in E_1} a,
$$
with $E_1$ defined as in the theorem.

To express the second factor involving $P$, let us first separate in
$H(P)$ the edges that lie at ordinate $j=0$, $j=1$, $j 
>1$, $j<0$. This gives
$$
H(P)= L_0^{(e)} \cdot L_1^{(o)}
\cdot \frac{H(P^N_ \ge)}{L_1^{(e)} \cap P^N} \cdot
\frac{H(P^S_\le)}{ L_0^{(o)} \cap P^S }.
$$
For the other 3 pinecones that are involved in this factor, we write:
$$
H(P^C)= \frac {H(P^C_\ge) H(P^C_\le)}{(L_0^{(e)} \cap P^C)(L_1^{(o)}
  \cap P^C)}, 
\quad H(P^N)= \frac{H(P^N_\ge) H(P^N_<)}{L_1^{(o)} \cap P^N_<},
\quad 
H(P^S)= \frac{H(P^S_\le) H(P^S_>)}{L_0^{(e)} \cap P^S_>}.
$$
The division by ${(L_0^{(e)} \cap P^C)(L_1^{(o)}  \cap P^C)}$ in the
first identity comes from the fact that $ H(P^C_\ge)$ and $
H(P^C_\le)$ have edges in common at ordinates 0 and 1. The other
divisions are justified in a similar way.
These identities, together with~\eqref{HA}, give:
$$
  \frac{H(A_N)H(A_S)}{H(A)H(A_C)}\ \frac{H(P)H(P^C)}{H(P^N)H(P^S)}
=
(L_1^{(e)} \setminus P^N) (L_0^{(o)} \setminus P^S)\ 
 \frac{( L_1^{(o)} \cap P^N_< ) H(P^C_\le)}{( L_1^{(o)} \cap P^C )
   H(P^N_<)}\ 
 \frac{( L_0^{(e)} \cap P^S_> ) H(P^C_\ge)}{( L_0^{(e)} \cap P^C )
   H(P^S_>)}.
$$
The ratio $ H(P^C_\le)/ (L_1^{(o)} \cap P^C) $ is the product of the
edges found at non-positive ordinates in the horizontal matching of $P^C_\le$.
Similarly, the ratio $H(P^N_<)/( L_1^{(o)} \cap P^N_< )$ is the product of the
edges found at non-positive ordinates in the horizontal matching of $P^N_<$.
But $P^N_<\subset P^C_\le$ (see~\eqref{inclusions-N} and its
accompanying Figure~\ref{fig:inclusion}), so the quotient of the two
ratios is $H^-(P^C\setminus P^N)$, the product of the edges found at
non-positive ordinates in the horizontal matching of $P^C_\le \setminus P^N_<$.
The remaining quotient involving $P^S_>$ is, symmetrically, equal to $
H^+(P^C\setminus P^S)$. This yields the result stated in the theorem.
\end{proof}

The refined condensation theorem specializes nicely to \emm
interleaved, pinecones.

\begin{Definition}[{\bf Interleaved pinecones}]
A closed pinecone is \emm interleaved, if, between two black squares in
row $r$, one finds a black square in row $r+1$ and a black square in
row $r-1$. 
\end{Definition}
This implies that, between two \emm consecutive, black
squares in row $r$, there is exactly one black square in row $r+1$, 
and one in row $r-1$. For instance, the pinecone to the right of
Figure~\ref{fig:inactive} is interleaved. Going back to
Theorem~\ref{thm:condensation-complete}, 
it is easy to see that for an interleaved pinecone, 
the graphs 
$P^C_\le\setminus P^N_<$ and $P^C_\ge\setminus P ^S_>$ 
are empty, so that $H^-(P^C\setminus P^N)= H^+(P^C\setminus P ^S)=1$.

\begin{coro}[{\bf The matching polynomial of interleaved pinecones}]
\label{coro:condensation-interleaved}
The matching polynomial of an interleaved pinecone $P$ is related to
the matching polynomial of its 
 closed sub-pinecones by
$$
M(P) M(P^C)= \left(\prod_{a\in E_1}a\right)
M(P^W) M(P^E)
+\left(\prod_{a\in E_2}a\right) 
M(P^N) M(P^S),
$$
where the sets $E_1$ and $E_2$ are described in
Theorem~\ref{thm:condensation-complete}. Moreover, the five
sub-pinecones of $P$ are also interleaved.
\end{coro}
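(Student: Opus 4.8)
The plan is to read the identity straight off the refined condensation theorem and then dispose of the two correction factors using the interleaving hypothesis; the ``moreover'' clause is a separate structural check. First I would apply Theorem~\ref{thm:condensation-complete} to the closed pinecone $P$. This already produces the displayed identity, except that the second term carries the two extra factors $H^-(P^C\setminus P^N)$ and $H^+(P^C\setminus P^S)$. Thus the entire content of the first assertion reduces to showing that, when $P$ is interleaved, both of these factors equal $1$.

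To prove $H^-(P^C\setminus P^N)=1$ I would show that the graph $P^C_\le\setminus P^N_<$ is empty (the argument for $H^+(P^C\setminus P^S)=1$ being symmetric, using \eqref{inclusions-S} in place of \eqref{inclusions-N}). Recall from the discussion preceding the corollary that, at each ordinate $j\le 1$, the horizontal edges of $P^C_\le\setminus P^N_<$ are the rightmost horizontal edges of $P^C$ at that ordinate, and that their presence measures exactly how far the black squares of $P^C$ protrude to the right of those of $P^N$. The key point is that the rightmost cell of $P^N$ is $r_1$, whereas $P^C$ is rooted on $(\ell'_0,r'_0)$. By the interleaving property of $P$, the rightmost black square $r_1$ of row $1$ lies strictly between the consecutive row-$0$ black squares $r'_0$ and $r_0$; since all row-$1$ black squares lie weakly to the left of $r_1$, interleaving would be violated if $r_1$ were weakly to the left of $r'_0$, so in fact $r'_0<r_1<r_0$. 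Propagating this comparison downward with the interleaving/monotonicity relations, I would conclude that in every row $j\le 1$ the rightmost black square of $P^C$ fails to protrude to the right of the corresponding square of $P^N$. Hence $P^C_\le\setminus P^N_<$ carries no horizontal edge, its horizontal matching is empty, and $H^-(P^C\setminus P^N)=1$. The identity of the corollary then follows immediately from Theorem~\ref{thm:condensation-complete}.

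It remains to verify that $P^C,P^W,P^E,P^N,P^S$ are themselves interleaved. Here I would invoke Proposition~\ref{big-from-small}: each of these is a \emph{closed} sub-pinecone of $P$, so its black-square set is a subset of that of $P$, and I must check that this subset is again interleaved. Two representative cases suffice. For $P^W$, the largest closed sub-pinecone with rightmost cell $r'_0$, interleaving and monotonicity force its black squares to be precisely those of $P$ with the rightmost black square deleted in each row. To check interleaving, take two consecutive black squares $b<b'$ in row $r$ of $P^W$; they are consecutive in row $r$ of $P$ as well, and $b'$ is not the rightmost square of that row of $P$, so there is a black square $b''$ of $P$ to its right. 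The square $c$ that $P$ interleaves between $b$ and $b'$ in row $r+1$ then lies to the left of the square $P$ interleaves between $b'$ and $b''$, so $c$ is not rightmost in its row and survives in $P^W$; the same works in row $r-1$. For $P^N$, the largest closed sub-pinecone with rightmost cell $r_1$, the longest row sits at row $1$ of $P$ and its black squares form the sub-pattern of $P$ obtained by re-rooting there; since interleaving is a local condition on consecutive squares in adjacent rows, it is inherited once one checks that no boundary row is spuriously truncated. The cases $P^E$, $P^S$, $P^C$ are handled by the same reasoning and are literally symmetric to $P^W$ and $P^N$ under the reflections exchanging left/right and top/bottom.

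The step I expect to be the main obstacle is the emptiness of $P^C_\le\setminus P^N_<$ (and its symmetric partner) in the second paragraph, since this is where the interleaving hypothesis is genuinely consumed and where the bookkeeping is most delicate: one must track which black square is rightmost in each row of the \emph{transplanted} sub-pinecones $P^N$ and $P^C$, whose rows and columns are offset relative to $P$. By contrast, the passage from Theorem~\ref{thm:condensation-complete} to the final identity is automatic once the two factors are known to be $1$, and the inheritance of interleaving by the five sub-pinecones is a routine local verification.
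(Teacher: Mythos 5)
Your proposal is correct and, for the main identity, follows the same route as the paper: specialize Theorem~\ref{thm:condensation-complete} and kill the two correction factors by showing that $P^C_\le\setminus P^N_<$ and $P^C_\ge\setminus P^S_>$ are empty. The paper dispatches this emptiness with a bare ``it is easy to see''; your second paragraph (the chain $r'_0<r_1<r_0$ forced by interleaving, propagated downward row by row so that the rightmost black squares of $P^C$ and $P^N$ coincide in every row $\le 0$) is exactly the verification being elided, and you correctly identify it as the step where interleaving is consumed. Where you genuinely diverge is the ``moreover'' clause. The paper proves it in one line, uniformly for all five sub-pinecones: each is the \emph{largest} closed sub-pinecone containing two prescribed vertical edges, so if $b,b'$ are consecutive black squares of the sub-pinecone in some row, the square that $P$ interleaves between them can be adjoined without violating closedness/monotonicity or the prescribed edges, whence maximality forces it to already be present. (This is the same maximality trick the paper reuses in the proof of Lemma~\ref{lem:stable}.) You instead do explicit black-square bookkeeping case by case via Proposition~\ref{big-from-small}; your $P^W$ analysis is complete and correct, but your $P^N$ case is noticeably sketchier (``inherited once one checks that no boundary row is spuriously truncated'' glosses over the junction between rows $0$ and $1$, where the lower half loses its rightmost square per row while the upper half is untouched), and $P^C$, $P^S$, $P^E$ are delegated to symmetry. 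Both arguments work; the maximality argument buys uniformity across all five cases and sidesteps the transplantation offsets you flag as delicate, while your direct verification has the side benefit of deriving the explicit description of the black squares of $P^W$ (deletion of the rightmost square in each row) that the paper itself needs later, at the start of Section~\ref{sec:active}.
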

The last statement follows from the fact that each of the five
sub-pinecones can be defined as the largest closed pinecone containing two
prescribed vertical edges. 

\subsection{Special edges}\label{sec:active}

We will now simplify further the expression of
Corollary~\ref{coro:condensation-interleaved}, by assigning
weight 1 to certain horizontal edges, called \emm ordinary,. If $P$ is
interleaved, the set of black squares of $P^W$ is obtained by deleting the
  rightmost black square in each row of $P$. Consequently, the rows
  that disappear when constructing $P^W$ from $P$ are those that
  contain only one black square. These are the rows that contain a
  left edge contributing to the
  set $E_1$. Moreover, the top and bottom rows of $P$ 
  contain exactly one black square, otherwise $P$ would not be
  interleaved. Hence $E_1$ has cardinality at least 2. We are going to
  assign weight 1 to all the edges of $E_1$ that lie neither on the
  top segment of $P$ nor on its bottom segment. 
Similarly, we will assign weight 1 to the edges of $E^1_2$ and 
$E_2^2$, so that the product of the edge-weights in $E_2$ will reduce to
$ew$.
 As we want to apply the condensation theorem
iteratively, this forces us to set to 1 the weights of \emm other, horizontal
edges, occurring for instance
in the sets $E^1_2$ and $E_2^2$ associated to the
five sub-pinecones of $P$. Iterating this procedure, 
we arrive at the
following definition of \emm ordinary, horizontal edges --- 
those that will have
weight 1. This definition is illustrated in
Figure~\ref{fig:inactive}. Note that it does not assume that the pinecone is
interleaved. 

\begin{Definition}\label{def:inactive}
An even horizontal edge $a$, lying 
at ordinate $r$ in a pinecone
(that is, between rows $r-1$ and $r$),  
is \emm ordinary, 
if the closest black square found in rows $r-1$ and $r$ weakly to the
right of $a$ is in row $r-1$. Otherwise, $a$ is said to be \emm special,. 
In particular, if an even edge $a$ lies in the bottom segment of $P$, 
it is special. 

Symmetrically, an odd horizontal edge $a$, lying at ordinate $r$, is
ordinary if the closest black square found in rows $r-1$ and $r$ weakly to
the right of $a$ is in row $r$. Otherwise, $a$ is said to be special. 
In particular, if an odd edge $a$ lies in the top segment of $P$, 
it is special. 

\end{Definition}

\begin{figure}[htb]
\begin{center}
\input{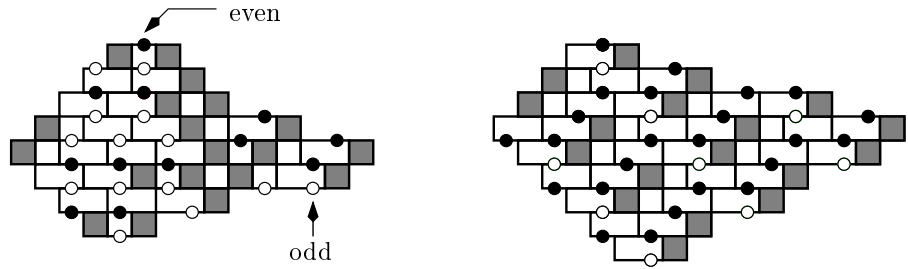}
\caption{The ordinary edges of a pinecone. The even ones are in black,
  the odd ones in white. The pinecone to the right is interleaved.} 
\label{fig:inactive}
\end{center}
\end{figure}

It is easy to check that in an interleaved pinecone,
the edges of $E_2^1$ and $E_2^2$ are ordinary. The following lemma
tells which edges of $E_1$ are special.
\begin{Lemma}\label{lemma:left-interleaved}
  Let $P$ be an interleaved closed pinecone. 
 There are exactly two left
  edges of $P$ that do not belong to $P^W$ and are special. One of them
  is even, and is the lowest left edge of $P$. The other is odd, and
  is the highest left edge of $P$.
\end{Lemma}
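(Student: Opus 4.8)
The plan is to argue directly from the combinatorial descriptions of $E_1$ and of the special/ordinary dichotomy, using the interleaving hypothesis and the monotonicity of the black squares. First I would identify which left edges lie in $E_1$. Because $P$ is interleaved, $P^W$ is obtained by deleting the rightmost black square in each row, so the staircase forming the left boundary of $P$ is unchanged except that a row disappears entirely when it loses its only black square. The leftmost edge of the segment at ordinate $h\ge 1$ is the top edge of the leftmost cell of row $h-1$, and the leftmost edge of the segment at ordinate $h\le 0$ is the bottom edge of the leftmost cell of row $h$; such an edge survives in $P^W$ if and only if its row is still nonempty there. Hence a left edge belongs to $E_1$ exactly when its associated row carries a single black square in $P$. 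Observe that the highest left edge is odd (it sits in the top segment, at ordinate $\ge 1$) and the lowest is even (bottom segment, ordinate $\le 0$), in agreement with the parities in the statement; and since interleaving forces any row with two black squares to have a nonempty neighbour both above and below, the topmost and bottommost rows each carry exactly one black square, so the highest and lowest left edges always lie in $E_1$.

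Next I would classify these edges using Definition~\ref{def:inactive}. The highest (odd) left edge lies immediately below an empty row, so it is special; symmetrically the lowest (even) left edge lies immediately above an empty row and is special. Now take any \emph{other} odd left edge in $E_1$, at ordinate $h$, whose associated row $h-1$ has a unique black square. Since it is not the top segment, the row $h$ just above it is nonempty, and the monotonicity of the rightmost black squares gives that the rightmost --- hence every --- black square of row $h$ lies strictly to the left of the single black square of row $h-1$. Scanning rightward from the left edge, the first black square encountered therefore lies in row $h$, i.e.\ above, so Definition~\ref{def:inactive} makes this edge ordinary. The even left edges are treated symmetrically, using the lower monotonicity condition. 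Consequently the only special members of $E_1$ are the highest and the lowest left edge, which is exactly the assertion of the lemma.

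The step I expect to be the main obstacle is this classification: one must interpret ``the closest black square weakly to the right of $a$'' unambiguously and then invoke monotonicity to compare the abscissa of the single black square of a one-square row with that of the leftmost black square of the neighbouring row. Some care is also required for the degenerate configurations --- a single black square, or a pinecone having no row strictly above (or below) row $0$ --- where the extreme left edges are edges of row $0$ itself and the stretches of one-square rows may reach down to row $0$; there the same monotonicity argument applies once one checks that the relevant neighbour of row $0$ is empty exactly when the corresponding extreme left edge is present.
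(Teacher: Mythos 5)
Your proof is correct and follows essentially the same route as the paper's: identify $E_1$ as the left edges of rows containing a single black square, then use the ``in particular'' clauses of Definition~\ref{def:inactive} for the two extreme edges and the monotonicity of the rightmost black squares to show every other left edge of $E_1$ is ordinary. The only difference is that where the paper delegates the classification step to Figure~\ref{fig:left-active}, you make the monotonicity argument (the single square of the neighbouring row lying strictly closer, in the row that renders the edge ordinary) explicit, which is a faithful unpacking rather than a new approach.
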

\begin{proof} As noted at the beginning of this subsection,
the left edges of $P$ that do not belong to $P^W$ are those that
belong to rows containing exactly one black square. Take an even edge
of this type. It belongs to the bottom portion of
$P$. Figure~\ref{fig:left-active} shows that it is always ordinary,
unless it lies on the bottommost horizontal segment of $P$. The proof
is similar for odd left edges.
\end{proof}

\begin{figure}[htb]
\begin{center}
\input{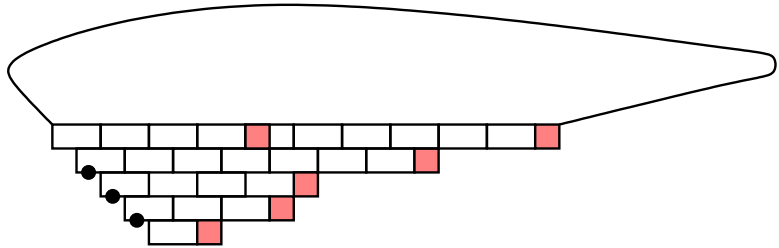}
\end{center}
\caption{The even ordinary left edges of $P\setminus P^W$.}
\label{fig:left-active}
\end{figure}

\begin{Lemma}\label{lem:stable}
Let $P$ be a closed pinecone, and $Q$ one of the five sub-pinecones
$P^C$, $P^W$, $P^E$, $P^N$, $P^S$.  The ordinary edges of $Q$ are exactly the
ordinary edges of $P$ belonging to $Q$.
\end{Lemma}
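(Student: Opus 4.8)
The proof rests on the fact that Definition~\ref{def:inactive} is entirely \emph{local}: whether a horizontal edge $a$ at ordinate $r$ is ordinary or special is decided by a single datum, namely whether, scanning rightward from $a$ through rows $r-1$ and $r$, the first black square one meets lies in row $r-1$ or in row $r$ (an even edge being ordinary in the former case, an odd edge in the latter; if no black square is met, the edge is special). By Proposition~\ref{big-from-small} each of the five sub-pinecones $Q$ is a closed sub-pinecone of $P$, so its edges and its black squares are among those of $P$ and $a\in Q$ keeps its ordinate $r$ and its even/odd type. Hence the scan performed inside $Q$ can differ from the scan performed inside $P$ only if the first black square met in $P$ fails to survive in $Q$. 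The lemma therefore reduces to the following \textbf{core claim}: for every $a\in Q$ at ordinate $r$, the nearest black square of $P$ lying weakly to the right of $a$ in rows $r-1$ and $r$ is again a black square of $Q$.

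To prove the core claim I would first record, from Proposition~\ref{big-from-small} and the constructive description of the core, that each $Q$ is the largest closed sub-pinecone of $P$ obtained by pinning one or two extremal (rightmost or leftmost) cells; thus the rightmost black squares of $Q$ form a monotone staircase gotten from that of $P$ by pushing it inward from the right exactly as far as the pinning and the monotonicity condition allow, and the black squares of $Q$ are precisely those of $P$ lying weakly to the left of this staircase. The left-pinned directions ($P^E$, and the left side of $P^C$) only delete black squares lying to the left of surviving edges, which can never affect a nearest-black-square-to-the-right, so they are immediate; the substance is in the right-trimmed cases $P^W$, $P^N$, $P^S$ and the right side of $P^C$. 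I would then split on whether the nearest black square $B$ lies in the row controlling the horizontal segment of $Q$ at ordinate $r$ (the row whose rightmost cell the segment reaches). If it does, then since $a\in Q$ the segment reaches $a$, so $a$ lies weakly left of the rightmost cell $b$ of $Q$ in that row; as $b$ is itself a black square of $Q$ lying weakly right of $a$ and $B$ is the \emph{leftmost} black square of $P$ weakly right of $a$ in that row, we get $B\le b$ and $B$ survives. The only genuine difficulty is the cross case, where $B$ lies in the row \emph{not} controlling the segment.

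In that cross case I would invoke the monotonicity between the two adjacent rows together with the parity alternation of black-square abscissas, which is the heart of the argument. Writing $b_{r-1},b_r$ for the rightmost black squares of $Q$ in rows $r-1,r$, monotonicity gives (say, with row $r$ the one farther from the longest row of $Q$) $b_r<b_{r-1}$, while the core recursion makes $b_r$ the rightmost black square of $P$ in row $r$ lying strictly left of $b_{r-1}$. If $B$ (in row $r$) failed to survive, i.e.\ $B>b_r$, then necessarily $B\ge b_{r-1}$; but the nearest black square of $P$ in row $r-1$ weakly right of $a$ is $\le b_{r-1}$ (since $b_{r-1}$ is such a black square), and the ordinary/special hypothesis puts $B$ strictly left of it, forcing $B<b_{r-1}$ — a contradiction. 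Once this is established for $P^W$, the same reasoning applies to $P^N$, $P^S$ and the right side of $P^C$, carried out relative to the longest row of each sub-pinecone; the translations recorded in Proposition~\ref{lemma:GRpine} have even coordinate-sum, hence preserve the even/odd type of edges and the black/white colouring, so Definition~\ref{def:inactive} transports unchanged. I expect the last fiddly point to be the edges lying on the extremal (top or bottom) segment of a sub-pinecone, where $Q$ has strictly fewer rows than $P$: there one must check separately that the ``special by boundary convention'' verdict in $Q$ agrees with the scan verdict in $P$, which again follows from the precise location of the extremal rows given by Proposition~\ref{big-from-small} together with the same monotonicity.
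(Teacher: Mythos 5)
Your proposal is correct and follows essentially the same route as the paper: both arguments rest on the locality of Definition~\ref{def:inactive} together with the fact that each sub-pinecone $Q$ is the \emph{largest} closed sub-pinecone of $P$ with prescribed extremal (rightmost/leftmost) cells, so that its black squares in each row form a staircase-determined prefix of those of $P$, which forces the nearest black-square witness of an edge of $Q$ to survive in $Q$. Your ``core claim'' simply packages the paper's two symmetric contradiction arguments (edge ordinary in $P$ but special in $Q$, and vice versa, each resolved by invoking maximality to force the intermediate square $c$ or $c'$ into $Q$) into a single survival statement, unfolding that maximality appeal into the explicit greedy-staircase inequalities $b_r<b_{r-1}$ from the core construction.
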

\begin{proof} Let $a$ be an even ordinary edge of $P$, lying at ordinate
$r$. Let $c$ be the first black square found in row $r-1$ weakly to the
right of $a$. By definition of ordinary edges, there is no black
square in row $r$ between $a$ and $c$. Assume $a$ belongs to $Q$ and
is \emm not,  ordinary in $Q$. Since we do not add squares when going from $P$
to $Q$, this means that $c$ does not belong to $Q$. Then there is no
black square in row $r-1$ to the right of $a$ in $Q$.  However, since
$a$ belongs 
to $Q$, there must be a black square $c'$ to the right of $a$ in row
$r$ of $Q$. This square $c'$ is also in $P$, and to the right of $c$.
But $Q$ is defined as the \emm largest, closed subpinecone of $P$ having
certain prescribed rightmost and leftmost edges, so that if it
contains $a$ and $c'$, it has to contain $c$ as well. We have thus
reached a contradiction, and $a$ is ordinary in $Q$.
\begin{center}
\input{Figures/inactive-active.pstex_t}
\end{center}

Conversely, assume $a$ is special in $P$, but ordinary in $Q$. The
latter property implies that there is a
black square $c$ in row $r-1$ of $Q$ to the right of
$a$. Of course, $c$ also belongs to $P$. Since $a$ is special in $P$,
there is a black square $c'$ in row $r$ of $P$ lying between $a$
and 
$c$. As $Q$ is the largest pinecone containing two prescribed edges,
and contains $a$ and $c$, the square $c'$ must be in $Q$ as well,
contradicting the assumption that $a$ is ordinary in $Q$.
\begin{center}
\input{Figures/active-inactive.pstex_t}
\end{center}
Of course, the proof is completely similar for odd special edges.
\end{proof}

\subsection{The partial matching polynomial}
For any pinecone $P$, define its \emm partial, matching polynomial
$\tilde M (P)$ to be
the value of $M(P)$ when the weights of all ordinary edges are set to
1.  We emphasize that this polynomial counts \emm perfect,
matchings (all vertices of $P$  belong to an edge in the
matching), but some of the edges have weight 1.
Assume $P$ is interleaved, and apply
Corollary~\ref{coro:condensation-interleaved}. As 
observed after Definition~\ref{def:inactive}, all the
edges of $E_2^1$ and $E_2^2$ are ordinary, so that they have weight
1. This means that the second monomial occurring in the condensation
formula is simply $ew$. Moreover, 
the special edges of $E_1$ are described in
Lemma~\ref{lemma:left-interleaved}. 
This, combined with Lemma~\ref{lem:stable}, implies the following
corollary.
\begin{coro}
\label{thm:condensation-interleaved}
The partial matching polynomial of an interleaved closed pinecone $P$
 is related to the partial matching polynomials of its 
 sub-pinecones by 
$$
\tilde M(P) \tilde M(P^C)= a a'
\tilde M(P^W) \tilde M(P^E) +ew
\tilde M(P^N) \tilde M(P^S),
$$ 
where $a$ and $a'$ are the highest and lowest left edges of $P$.
\end{coro}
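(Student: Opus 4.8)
The plan is to derive this statement from its un-simplified predecessor, Corollary~\ref{coro:condensation-interleaved}, by specializing the edge-weights rather than reproving a condensation identity from scratch. Concretely, I would apply the evaluation homomorphism $\phi$ that sends the weight of every \emph{ordinary} horizontal edge of $P$ to $1$ and fixes every other edge-variable. By the very definition of the partial matching polynomial, $\phi$ sends $M(P)$ to $\tilde M(P)$. The first thing to establish is that $\phi$ \emph{simultaneously} sends each of $M(P^C), M(P^W), M(P^E), M(P^N), M(P^S)$ to the corresponding $\tilde M(\cdot)$. This is exactly the content of Lemma~\ref{lem:stable}: the ordinary edges of any of the five sub-pinecones $Q$ are precisely the ordinary edges of $P$ lying in $Q$, so specializing the ordinary edges of $P$ to $1$ inside $M(Q)$ (a polynomial in the edges of $Q$, which are a subset of those of $P$) yields $\tilde M(Q)$. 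I regard this coherence as the one delicate point: a priori an edge ordinary in $P$ could become special in a sub-pinecone, which would break the substitution, and Lemma~\ref{lem:stable} is precisely what rules that out.

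Next I would evaluate $\phi$ on the two monomial prefactors of Corollary~\ref{coro:condensation-interleaved}. For the second monomial, recall that $E_2 = E_2^0 \cup E_2^1 \cup E_2^2$ with $E_2^0 = \{e,w\}$, and that for an interleaved pinecone every edge of $E_2^1$ and $E_2^2$ is ordinary (the observation recorded just after Definition~\ref{def:inactive}). Hence $\phi\bigl(\prod_{a \in E_2} a\bigr) = ew$. For the first monomial I would invoke Lemma~\ref{lemma:left-interleaved}: among the left edges of $P$ not belonging to $P^W$---that is, the elements of $E_1$---exactly two are special, namely the highest (odd) left edge and the lowest (even) left edge, which are the edges named $a$ and $a'$ in the statement; every remaining edge of $E_1$ is ordinary. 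Therefore $\phi\bigl(\prod_{a \in E_1} a\bigr) = a\,a'$.

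Assembling these computations, applying $\phi$ to both sides of the identity in Corollary~\ref{coro:condensation-interleaved} yields
$$
\tilde M(P)\,\tilde M(P^C) = a\,a'\,\tilde M(P^W)\,\tilde M(P^E) + ew\,\tilde M(P^N)\,\tilde M(P^S),
$$
which is the claim. The only genuine work is bookkeeping: confirming that $\phi$ is well defined as a single substitution acting correctly on all six matching polynomials at once, and this rests entirely on Lemma~\ref{lem:stable}. Everything else is a direct reading-off of which edges of $E_1$ and $E_2$ survive the specialization, using Lemma~\ref{lemma:left-interleaved} together with the interleaving hypothesis. I do not expect any obstacle beyond keeping the parity and position labels (even/odd, highest/lowest) straight when identifying $a$ and $a'$.
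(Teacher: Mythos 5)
Your proposal is correct and matches the paper's own argument essentially step for step: the paper likewise obtains the corollary by specializing Corollary~\ref{coro:condensation-interleaved}, using the observation after Definition~\ref{def:inactive} to reduce the $E_2$ monomial to $ew$, Lemma~\ref{lemma:left-interleaved} to reduce the $E_1$ monomial to $aa'$, and Lemma~\ref{lem:stable} to ensure the substitution acts coherently on all six matching polynomials. Your framing via an explicit evaluation homomorphism $\phi$ is just a more formal phrasing of the same specialization, and you correctly identify Lemma~\ref{lem:stable} as the one delicate point.
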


Since the Gale-Robinson pinecones constructed in Section~\ref{sec:pinecones-GR}
are interleaved, we have obtained a combinatorial 
interpretation of the Gale-Robinson polynomials.

\begin{Theorem}
\label{thm:GR-pinecones-refined}
Let $P(n)\equiv P(n;i,j,k,\ell)$ be the sequence of pinecones
associated with the parameters $(i,j,k,\ell)$. 
 Let $q(n)\equiv (n;u,v)$ be the polynomial in $u$ and $v$
that counts the perfect matchings of $P(n)$ according to the number of
vertical edges (the exponent of $v$) 
and horizontal special edges (the exponent of $u$). 
Then $q(n)=1$ for $n<m$ and for $n \ge m$,
$$
q(n)q(n-m) =u^2 q(n-i)q(n-j)+v^2 q(n-k)q(n-\ell).
$$
\end{Theorem}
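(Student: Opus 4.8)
The plan is to combine the two main results that have just been established---Corollary~\ref{thm:condensation-interleaved} (the partial-matching-polynomial condensation for interleaved pinecones) and Proposition~\ref{lemma:GRpine} (which identifies the five sub-pinecones of $P(n)$ with the earlier pinecones in the sequence)---and to track carefully how the distinguished edges $a$, $a'$, $e$, $w$ of Corollary~\ref{thm:condensation-interleaved} contribute to the bivariate count. The skeleton of the argument will mirror exactly the proof of Theorem~\ref{thm:GR-pinecones}: verify the initial conditions, then apply the condensation identity and rewrite each factor in terms of $q(n-i), q(n-j), q(n-k), q(n-\ell), q(n-m)$.

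First I would dispose of the initial conditions. For $n<m$ the pinecone $P(n)$ is empty, its unique (empty) matching has no vertical and no special horizontal edges, so $q(n)=u^0 v^0 = 1$, exactly as asserted. Next, for $n\ge m$ I would invoke the fact, proved in Section~\ref{sec:pinecones-GR}, that every $P(n)$ is interleaved, so Corollary~\ref{thm:condensation-interleaved} applies and gives
$$
\tilde M(P(n))\,\tilde M(P(n)^C) = a\,a'\,\tilde M(P(n)^W)\tilde M(P(n)^E) + e\,w\,\tilde M(P(n)^N)\tilde M(P(n)^S),
$$
where $a,a'$ are the highest and lowest left edges of $P(n)$ and $e,w$ are its eastmost and westmost vertical edges. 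By Proposition~\ref{lemma:GRpine} the five sub-pinecones are translates of $P(n-m),P(n-i),P(n-j),P(n-k),P(n-\ell)$; since translation preserves both the vertical-edge count and the special-horizontal-edge count, I would argue that $\tilde M$ specialized to the $(u,v)$-weighting equals $q$ on each, yielding $\tilde M(P(n)^C)\mapsto q(n-m)$, $\tilde M(P(n)^W)\mapsto q(n-i)$, and so on.

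The heart of the matter---and the step I expect to be the main obstacle---is showing that the two prefactors $a\,a'$ and $e\,w$ each specialize to $u^2$ and $v^2$ respectively under the substitution that records vertical edges by $v$ and special horizontal edges by $u$. For $e\,w$ this is immediate, since $e$ and $w$ are both vertical edges, so their product contributes $v^2$. The delicate point is $a\,a'$: I must confirm that the highest and lowest left edges of a Gale-Robinson pinecone are both \emph{special} in the sense of Definition~\ref{def:inactive}, so that each contributes a factor $u$ rather than the weight $1$ borne by ordinary edges. This is precisely what Lemma~\ref{lemma:left-interleaved} delivers: the lowest left edge is even and lies on the bottommost horizontal segment of $P(n)$ (hence special by the last clause of the even case of Definition~\ref{def:inactive}), while the highest left edge is odd and lies on the top segment (hence special by the odd case). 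I would spell out that these are exactly the edges $a,a'$ named in Corollary~\ref{thm:condensation-interleaved}, so that $a\,a'$ contributes $u^2$.

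Assembling these specializations, the condensation identity becomes
$$
q(n)\,q(n-m) = u^2\,q(n-i)\,q(n-j) + v^2\,q(n-k)\,q(n-\ell),
$$
which is the claimed recurrence. Finally I would remark that comparing this recurrence and its initial conditions with those defining $p(n;w,z)$ in Theorem~\ref{thm:GR-refined}, under the substitution $w=u^2$, $z=v^2$, shows $q(n;u,v)=p(n;u^2,v^2)$; since $q$ is manifestly a generating polynomial with nonnegative integer coefficients (it enumerates actual perfect matchings), this yields Theorem~\ref{thm:GR-refined} as an immediate consequence. The only genuinely nonroutine verification is the specialness of the two left edges $a,a'$, and that has been prepared in advance by Lemma~\ref{lemma:left-interleaved}; everything else is bookkeeping of weights under translation.
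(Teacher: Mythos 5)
Your proposal is correct and follows essentially the same route as the paper: the paper obtains Theorem~\ref{thm:GR-pinecones-refined} exactly by combining the interleaving property of the Gale-Robinson pinecones, Corollary~\ref{thm:condensation-interleaved}, and Proposition~\ref{lemma:GRpine}, with $a\,a'\mapsto u^2$ (via the specialness of the highest and lowest left edges, Lemma~\ref{lemma:left-interleaved}) and $e\,w\mapsto v^2$. Your extra checks---the empty-pinecone initial conditions and the parity-preserving translations (which keep specialness intrinsic to each sub-pinecone, consistent with Lemma~\ref{lem:stable})---are exactly the bookkeeping the paper leaves implicit.
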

This proves Theorem~\ref{thm:GR-refined}, as the recurrence shows that
$q(n;u,v)=p(n;u^2,v^2)$.

\section{Perspectives}

\subsection{Variations and extensions}
There is a good deal of overlap between this article
and the paper by David Speyer on the general octahedron
recurrence, of which the Gale-Robinson recurrence is
a very special case~\cite{Sp}.  Speyer's method
allows him to construct, for each $(i,j,k,\ell)$ with
$i+j=k+\ell$, a sequence of graphs having the same number
of perfect matchings as the pinecones we construct.
We believe that our graphs are the same as the ones that 
are given by Speyer's procedure,
but we have not proved that this holds in general.

One undesirable feature of our description of Gale-Robinson pinecones
is that it breaks some of the symmetries
between the parameters $i$, $j$, $k$, and $\ell$.
Clearly, exchanging $k$ and $\ell$ reflects the pinecone
across a horizontal line. 
But the convention $j=\min\{i,j,k,\ell\}$ implies that $i$ and $j$ do
not play symmetric roles, nor the pairs $\{i,j\}$ and $\{k,\ell\}$.
This explains why the description of the
bivariate polynomials $q(n;u,v)$ of
Theorem~\ref{thm:GR-pinecones-refined} is not symmetric in $u$ and $v$.
Perhaps some of this asymmetry is unavoidable, but
it would be good to find a more symmetrical definition
or else achieve some insight into 
why the asymmetry cannot be avoided.  

Indeed, part of the point of view that led to both
this article and Speyer's is that the truly fundamental
objects of study are functions that map a three-dimensional
lattice to some ring and that obey the octahedron relation
$$f({\bf x} + {\bf i}) f({\bf x} - {\bf i})
+ f({\bf x} + {\bf j}) f({\bf x} - {\bf j})
+ f({\bf x} + {\bf k}) f({\bf x} - {\bf k}) = 0$$
(where ${\bf x}$ is an arbitrary vector in the lattice
and ${\bf i}$, ${\bf j}$, ${\bf k}$ are fixed
generators of the lattice)
and more general versions of the relation that
include coefficients of various kinds.
There is no intrinsic ``arrow of time'' here
(as there is when one thinks of running a recurrence relation
forward from some set of initial conditions),
but some sets of initial conditions are sufficiently large
that they allow one to reconstruct the entirety of $f$,
and some of these subsets of the lattice
can be viewed as ``space-like'',
so that one can think of the reconstruction 
of successive slices of the lattice as a kind of propagation.
In the fully symmetrical version,
there is no reason to privilege one direction over its reverse,
or one axis over another.

In contrast, when one descends from this level
to the more concrete world of graphs and perfect matchings,
the symmetry appears to be broken.
A full theory of the octahedron recurrence
would incorporate graph-theoretic analogues
of all the symmetries of the three-dimensional lattice;
such an understanding is currently lacking.
Just as Ehrhart theory for enumeration of lattice-points in polytopes
can best be understood in a context
that includes inside-out polytopes~\cite{BZ},
the theory of Aztec diamonds, crosses-and-wrenches, and pinecones
requires notions of geometric graphs in which
combinatorial parameters that are ordinarily required to be positive
can take on negative values as well.
(E.g., one needs a theory in which the notion of an Aztec diamond 
of order 4 and an Aztec diamond of order $-4$ enter on an equal
footing, and the latter graph turns out to be
essentially the same things as an Aztec diamond graph of order 3.)
As a hint of what such a theory might look like,
the interested reader should look at~\cite{Pr1} and~\cite{ABBP}.

\medskip

The bivariate polynomials $p(n;w;z)$
studied in Section~\ref{sec:bivariate}
generalize Gale-Robinson numbers.  A different extension of these
numbers comes from replacing the initial conditions (a string of $m$ 1's)
by generic initial conditions (indeterminates $x_0$ through $x_{m-1}$).
Here again, Fomin and Zelevinsky proved algebraically,
and Speyer proved combinatorially,
that the rational functions one obtains
are Laurent polynomials in $x_0,\dots,x_{m-1}$.
Speyer's work shows that these variables,
in contrast to the formal coefficients $w$ and $z$ mentioned above,
are most naturally viewed as being associated
with the faces of a graph, rather than its edges.
So there should be a way to associate these $m$ variables
with the faces of our pinecones
and use them to assign weights to the perfect matchings
so that the weight of each perfect matching of a pinecone
is a Laurent monomial in $x_0,\dots,x_{m-1}$.
Indeed, there should be an extension of
Theorem~\ref{thm:GR-pinecones-refined}
that describes the Laurent polynomials that arise
from setting $a(n)=x_n$ for $0 \leq n \leq m-1$
and $a(n) = (wa(n-i)a(n-j)+za(n-k)a(n-\ell))/a(n-m)$ for $n \geq m$,
and in particular identifies each Laurent monomial in $a(n)$
as the weight of a perfect matching of $P(n;i,j,k,\ell)$.

Most of the work of this article was done in 2005 and 2006,
as the study of cluster algebras was beginning its (still continuing)
outward explosion, so there are now other approaches to
proving positivity results that have some overlap with
the approach taken here.  In particular, it is possible
that pinecones graphs can also be viewed as 
Aztec diamond graphs with defects, in the manner of~\cite{DFZ}.

\subsection{Random pinecone matchings}
A rather different direction that might be studied
is the ``typical'' behavior of perfect matchings of large pinecones.
Figure~\ref{fig:random}~shows two 
tilings associated with matchings of Somos-4 pinecones. 
(Here we make use of the standard duality between 
a tiling of a polyomino by dominos 
and a perfect matching of the dual graph of the polyomino,
in which vertices correspond to cells of the polyomino
and edges correspond to pairs of adjoining cells,
i.e.\ legal positions of a domino in a tiling.)
The first one corresponds to $n=26$
(that is, to a perfect matching of the graph $P(26;3,1,2,2)$), 
the second one to $n=50$.
Both were chosen \emm uniformly at random, from the set of all
perfect matchings of that graph.
These examples were produced using Propp and Wilson's papers on ``exact
sampling''~\cite{PW1,PW2} which show how the method of ``coupling from
the past'' permits one to generate random perfect matchings 
of bipartite planar graphs.
Indeed, this algorithm was incorporated into a program called {\tt vaxrandom}
that accepts a VAX-file as input and produces
a perfect matching of the associated graph as output, or rather,
the dual picture of a domino tiling of a region.
The source code for the program is contained in the files
{\tt http://jamespropp.org/tiling/sources/vaxrandom.c}
and
{\tt http://jamespropp.org/tiling/sources/allocate.h},
and information on the program's use can be found at
{\tt http://jamespropp.org/tiling/doc/vaxrandom.html}.)

\begin{figure}[hbt]
\begin{center}
 \vskip -10mm  \includegraphics[angle=270, scale =0.3]{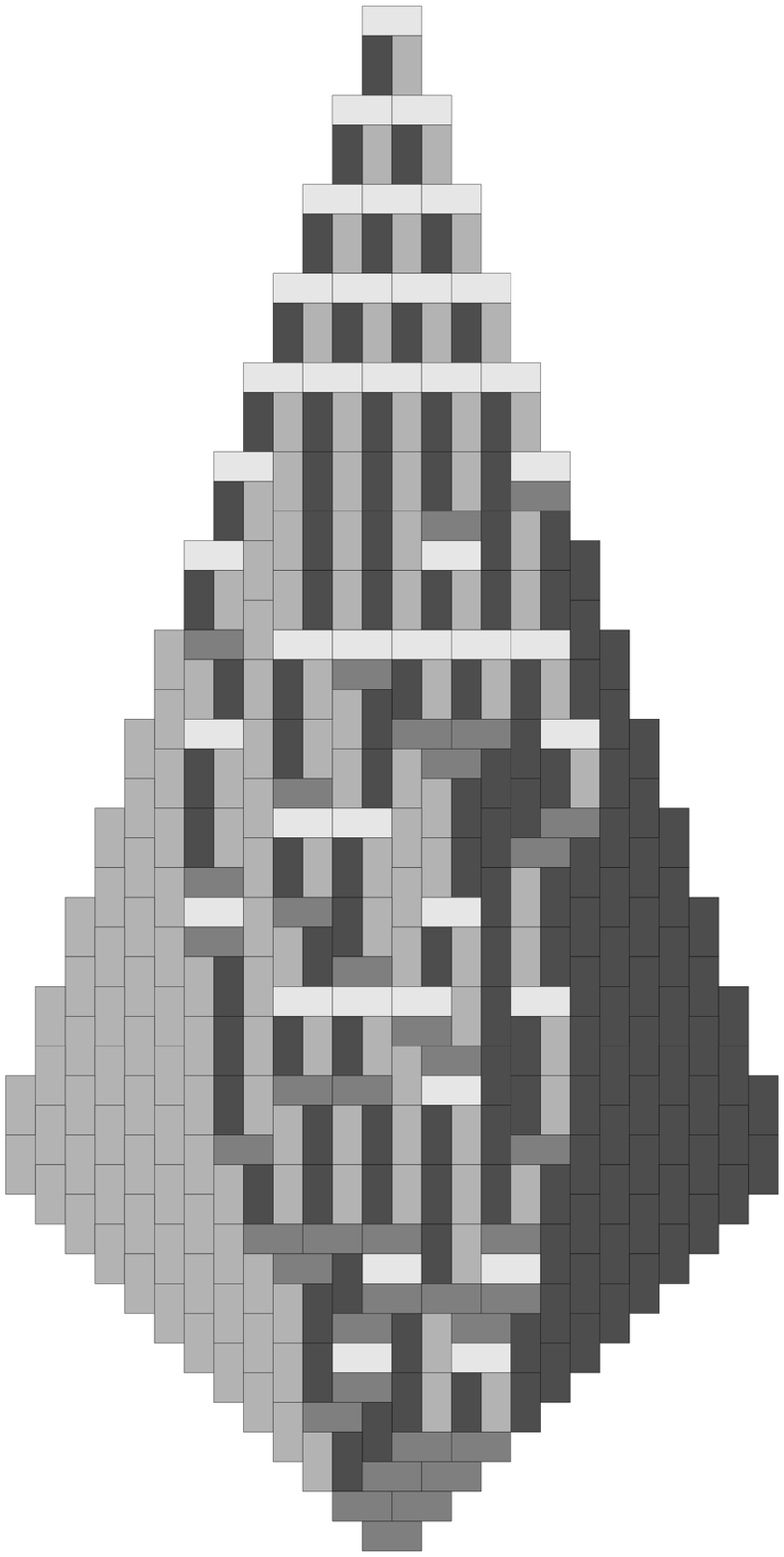}  \\ 
\includegraphics[angle=270, scale =0.3]{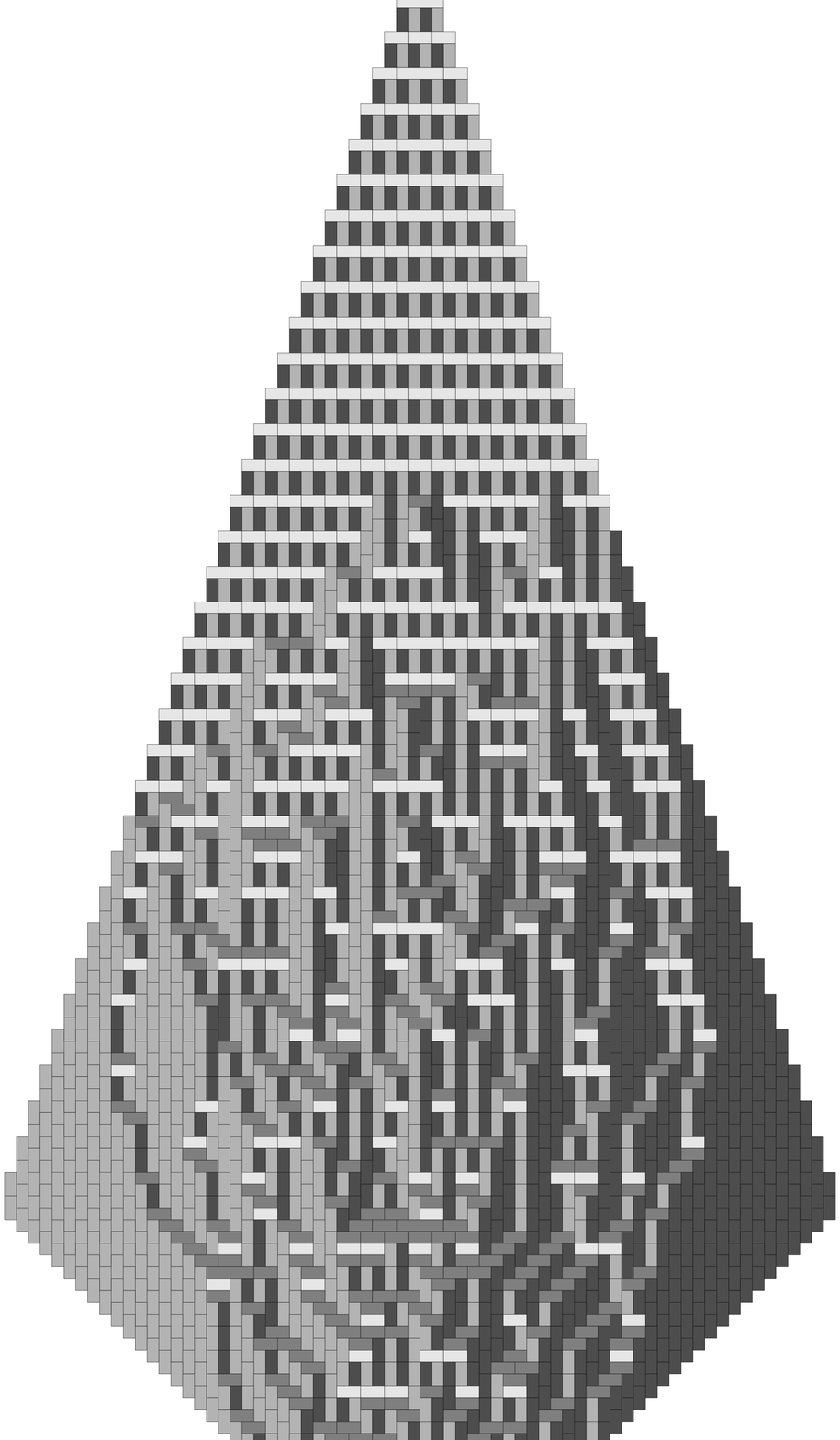}   
\end{center}
\vskip -10mm \caption{The domino tilings associated with random
perfect matchings of the pinecone $P(n; 1, 3, 2, 2)$, for $n=26$ and
then $n=70$.}
\label{fig:random}
\end{figure}

The reader will quickly notice that in both of these random tilings,
the randomness is not spatially distributed in a uniform manner.
Near the boundary, there is a good deal of
order, with tiles lined up the same way as their neighbors;
only in the interior does one find random-looking behavior.

This phenomenon is not specific to pinecones, but has been observed
for a wide variety of two-dimensional tiling models over the past decade,
from~\cite{CEP} and~\cite{CLP} to~\cite{KOS}.
The most-studied case is the Aztec diamond graph ($(i,j,k,\ell)=(1,1,1,1)$,
in our notation); in this case, it has been shown that in a suitable
asymptotic sense there is a sharp boundary between the part of the
tiling that is random and the part that is orderly, and that this
boundary is (asymptotically) a perfect circle.  A similar sort of
domain-boundary is visible in 
Figure~\ref{fig:random};
assuming that the theory for pinecones is analogous
to the theory for Aztec diamond graphs,
it would be interesting to know the asymptotic shape
of the domain-boundary for $(i,j,k,\ell)$-pinecones
as $n \rightarrow \infty$.

One interesting feature of Gale-Robinson pinecones is that
we can write the definition in a way that makes sense even when 
the parameters $(i,j,k,\ell)$ cease to be integers.
Formula (\ref{UL-def}) can be rewritten as
\beq
\begin{array}{lll}
U(t,r,c) &= &2c + r -3 - 2 \left\lfloor{\displaystyle 
\mu c + \kappa r + \iota - t}\right\rfloor , \\
\\
L(t,r,c) &=& 2c + r -3 - 2 \left\lfloor{\displaystyle 
\mu c + \lambda r + \iota - t}\right\rfloor.
\end{array}
\eeq
where $\iota = i/j$, $\kappa = k/j$, $\lambda = \ell/j$, 
$\mu = m/j = \iota + 1 = \kappa + \lambda$, and $t = (n+1)/j$.
So there is a sense in which all the pinecones
discussed in this article
are part of a four parameter family,
parametrized by $\iota$, $\kappa$, $\lambda$ and $t$.
Of course, the graphs do not vary continuously in these variables
(being discrete elements in a countable set,
namely the set of all finite graphs,
how could they?),
but this parametrization seems likely to be natural
for some purposes,
e.g., the study of random perfect matchings of pinecones.
(It is to be expected that a coherent limit-law 
with $t \rightarrow \infty$ will prevail for
any fixed choice of $(\iota,\kappa,\lambda)$,
whether or not $\iota$, $\kappa$, and $\lambda$ are rational.)
It should be noted, incidentally,
that if one chooses parameters $(i,j,k,\ell)$
with a greater common divisor $d > 1$,
the sequence of pinecones one gets from our construction
is the same as the sequence of pinecones that one gets
from the parameters $(i/d,j/d,k/d,\ell/d)$,
except that each pinecone in the latter sequence
is repeated $d$ times in the former sequence;
this observation follows easily from
the $\iota,\kappa,\lambda$ formulation
of the definitions of $U(\cdot)$ and $L(\cdot)$.

\subsection{Closed-form expressions}
One feature common to sequences
satisfying three-term or four-term Gale-Robinson recurrences
is that the terms grow at quadratic exponential rate. 
Indeed, it is easy to verify, 
from the discussion of pinecones,
that in the infinite sequence of graphs
associated with any particular three-term Gale-Robinson recurrence,
the $n$th graph has $O(n^2)$ vertices,
with each vertex having degree at most 4.
It follows from this that Gale-Robinson sequences
have at most exponential-quadratic growth;
that is, the $n$th term is bounded above by $C^{n^2}$
for all sufficiently large $C$.
In some cases, an exact formula is possible;
we have already mentioned the ``Aztec diamond case'' $i=j=k=\ell=1$,
and in the case $(i,j,k,\ell,m)=(6,1,4,3,7)$
there is an exact formula for $a(n)$
of the form $2^{e_2(n)} 3^{e_3(n)}$
where the exponents $e_2(n)$ and $e_3(n)$
are given by quadratic polynomials in $n$
whose coefficients are periodic functions of $n$
(we thank Michael Somos for bringing this special case 
of the Gale-Robinson recurrence to our attention,
and we raise the question of whether there are other instances 
of Gale-Robinson sequences being given by simple exact formulas).
However, in general such algebraic formulas do not exist.
Instead, one must be content with formulas that express
the $n$th term in terms of Jacobi theta functions.
This link with the analytic world is what motivated
Michael Somos to introduce the Somos-$k$ sequences to begin with.
E.g., back in 1993, Somos announced (without proof)
that the $n$th term of the Somos-6 sequence is given by
$f(n-2.5,n-2.5)$
where
$$f(x,y) = c_1 c_2^{xy} \sum_{k_2=-\infty}^{\infty}
(-1)^{k_2} \sum_{k_1=-\infty}^{\infty} g(k_1,k_2,x,y),$$
$$g(k_1,k_2,x,y) = c_3^{k_1^2} c_4^{k_2^2} c_5^{k_1 k_2} 
\cos(c_6 k_1 x + c_7 k_2 y),$$
$$c_1 = 0.875782749065950194217251...,$$
$$c_2 = 1.084125925473763343779968...,$$
$$c_3 = 0.114986002186402203509006...,$$
$$c_4 = 0.077115634258697284328024...,$$
$$c_5 = 1.180397390176742642553759...,$$
$$c_6 = 1.508030831265086447098989...,$$ 
$$c_7 = 2.551548771413081602906643...$$
(See {\tt http://jamespropp.org/somos/elliptic}
for a similar but simpler formula for the Somos-4 sequence.)
However, as far as we are aware,
nobody has proposed (or even conjectured) a fully general 
analytic formula for the terms of sequences
satisfying three-term Gale-Robinson recurrences.
A more detailed discussion of the analytic properties
of such sequences can be found in~\cite{Hone},
which also gives some of the history of these sequences.

It is worth mentioning that for the Somos-4 sequence,
there exists a unique constant $c$
such that $s(n)$ (the $n$th term of the sequence)
is on the order of $c^{n^2}$,
but that the behavior of $s(n)/c^{n^2}$ is oscillatory;
see {\tt http://jamespropp.org/somos/elliptic}. 
Let us also mention a
recent paper by Xin~\cite{xin}  where the Somos-4 numbers are
expressed as  determinants of Hankel matrices with integer coefficients.

\subsection{Analogy with the KP hierarchy}
We conclude with some remarks (based on some unpublished
remarks of Andrew Hone) about the analogy between Somos 
sequences and the like and the hierarchy of solutions to 
an integrable PDE like the KdV equation, followed by our
own speculation about a direction for further study that
the analogy might suggest.

The equation $$u_{xxx} + 6 u u_x + u_t = 0,$$
where $u = u(x,t)$ is the function we want to solve for
and subscripts indicate partial differentiation
(e.g., $u_{xxx} = \frac{\partial^3 u}{\partial x^3}$)
is known as the KdV equation,
and has played a crucial role in the modern theory of
partial differential equation,
as part of a large family of equations
with related properties (the ``KP hierarchy'').
If one sets $u = 2 (\partial_x)^2 \log F$
one can rewrite the PDE in the compact form
$$(D_x D_x D_x D_x + D_x D_t) (F \otimes F) = 0$$
where $D_x$ and $D_t$ are the ``Hirota $D$-operators''
acting on tensor-pairs of functions via
$$D_x (f(x,t) \otimes g(x,t)) = 
(\partial_{x_1} - \partial_{x_2}) f(x_1,t) g(x_2,t) |_{x_1 = x_2 = x}$$
and
$$D_t (f(x,t) \otimes g(x,t)) = 
(\partial_{t_1} - \partial_{t_2}) f(x,t_1) g(x,t_2) |_{t_1 = t_2 = t}.$$
(Note that in the literature on KdV, this tensor product is traditionally
written as $f \cdot g$ rather than $f \otimes g$ and is called the
``dot-product'', but it is a tensor product, not an inner product).
More generally, the bilinear method is the trick of rewriting 
PDEs in the form $P(D_x, D_y, \ldots) (F \otimes F) = 0$.
Hirota operators are antisymmetric, 
so we can think of them as actions
on the antisymmetric square of a vector space of functions.
For more on the Hirota method, see e.g.~\cite{Hirota}.

Analogously, if we take $V$ to be the
vector space of real- (or complex-) valued
bilaterally infinite sequences $(\ldots,s_0,\ldots)$, 
we may define, for every pairs of integers $i,j$,
a \emm bilinear shift operator $V \otimes V \rightarrow V$,
sending $(s_n)_{-\infty}^{\infty} \otimes (t_n)_{-\infty}^{\infty}$ 
to $(s_{n+i} t_{n+j})_{-\infty}^{\infty}$
(the sequence whose $n$th term is
$s_{n+i} t_{n+j}$ for all $n \in \zs$).
These operators, graded by $i+j$,
generate a graded ring of bilinear shift-operators,
and the Somos sequences and Gale-Robinson sequences
are special instances of sequences $(s_n)_{-\infty}^{\infty}$
for which the tensor-square 
$(s_n)_{-\infty}^{\infty} \otimes (s_n)_{-\infty}^{\infty}$
lies in the kernel of a particular bilinear operator.
It has been noticed that for a typical Somos or Gale-Robinson sequence,
the tensor-square of the sequence,
in addition to being annihilated by the ``defining'' bilinear operator,
is annihilated by infinitely many others as well.
In fact, there is more than just an analogy at work here:
each GR recurrence can be written in terms of Hirota differential operators
by taking exponentials (see~\cite{SS}).
Hopefully, by combining algebraic, analytic, and combinatorial tools,
future researchers will shed some light on this intriguing phenomenon.

\bibliographystyle{plain}
\bibliography{GR.bib}

\end{document}